\newtheorem{thm}{Theorem}
\newtheorem{cor}[thm]{Corollary}
\newtheorem{lemma}[thm]{Lemma}
\newtheorem{prop}[thm]{Proposition}
\newtheorem{conj}[thm]{Conjecture}
\newcommand{\R}{\mathbb{R}}
\newcommand{\E}{\mathbb{E}}
\newcommand{\Prob}{\mathbb{P}}
\newcommand{\N}{\mathbb{N}}
\newcommand{\Z}{\mathbb{Z}}
\newcommand{\C}{\mathbb{C}}
\newcommand{\inprod}[2]{\left\langle #1, #2 \right\rangle}
\newcommand{\abs}[1]{\left\vert #1 \right\vert}
\newcommand{\norm}[1]{\left\Vert #1 \right\Vert}
\DeclareMathOperator{\vol}{vol}
\newcommand{\Set}[2]{\left\{#1 \mathrel{} \middle| \mathrel{} #2
  \right\}}
\newcommand{\Mag}[1]{\operatorname{Mag}\left( #1 \right)}
\DeclareMathOperator{\inrad}{inrad}
\newcommand{\ind}[1]{\mathbbm{1}_{#1}}
\author{Mark W.\ Meckes}
\email{mark.meckes@case.edu}
\address{Department of Mathematics, Applied Mathematics, and
  Statistics, Case Western Reserve University, 10900 Euclid Ave.,
  Cleveland, Ohio 44106, U.S.A.}
\title[Magnitude and intrinsic volumes]{On the magnitude
  and intrinsic volumes of a convex body in Euclidean space}
\date{12 October 2019}
\begin{document}

\begin{abstract}
  Magnitude is an isometric invariant of metric spaces inspired by
  category theory. Recent work has shown that the asymptotic behavior
  under rescaling of the magnitude of subsets of Euclidean space is
  closely related to intrinsic volumes.  Here we prove an upper bound
  for the magnitude of a convex body in Euclidean space in
  terms of its intrinsic volumes.  The result is deduced from an
  analogous known result for magnitude in $\ell_1^N$, via approximate
  embeddings of Euclidean space into high-dimensional $\ell_1^N$
  spaces. As a consequence, we deduce a sufficient condition for
  infinite-dimensional subsets of a Hilbert space to have finite
  magnitude.  The upper bound is also shown to be sharp to first order
  for an odd-dimensional Euclidean ball shrinking to a point; this
  complements recent work investigating the asymptotics of magnitude
  for large dilatations of sets in Euclidean space.
\end{abstract}

\maketitle

\section{Introduction and main results}

Magnitude is an isometric invariant of metric spaces defined by
Leinster \cite{Leinster} based on category-theoretic considerations.
It is an abstract notion of the size of a metric space, which in some
ways serves as an ``effective number of points'' in the space.
Magnitude turns out to encode many classical invariants from integral
geometry and geometric measure theory, including volume, capacity,
dimension, and surface area.  See \cite{LeMe-survey} for a survey of
connections between magnitude and geometry. In other directions,
magnitude has connections to graph invariants \cite{Leinster-graph},
theoretical ecology \cite{SoPo,LeMe-div}, and homology theory
\cite{HeWi,LeSh,Otter,Hepworth,Cho}.

The purpose of this note is to show that the magnitude of a convex
body (i.e., a nonempty compact convex set) $K$ in the $d$-dimensional Euclidean
space $\ell_2^d$ is bounded above by a particular linear combination
of the intrinsic volumes of $K$ (Theorem \ref{Thm:l2-magnitude}).  The
only such sets whose magnitudes are known explicitly are Euclidean
balls for odd $d$, and even in those cases the statement for arbitrary
odd $d$ is quite complicated \cite{BaCa,Willerton-ball1} (see Theorem
\ref{Thm:Willerton-ball} below).

The upper bound in Theorem \ref{Thm:l2-magnitude} can be used to show
that certain infinite-dimensional compact sets in a Hilbert space have
finite magnitude, specifically, so-called Gaussian bounded sets
(Corollary \ref{Cor:Hilbert}). The bound is also sharp to first order
for odd-dimensional Euclidean balls with small radius, as shown in
Theorem \ref{Thm:ball-derivative}.  These results can be used to
clarify the asymptotic behavior of the magnitude of a convex body in
$\ell_2^d$ as it shrinks to a point (Corollaries \ref{Cor:point} and
\ref{Cor:mean-width}).

Magnitude can be defined in several equivalent ways (see
\cite{LeMe-survey}). For the purposes of this paper the following will
suffice.  A metric space $(X,d)$ is called \textbf{positive definite}
if, for each $n \in \N$ and each collection of distinct
$x_1, \dots, x_n \in X$, the matrix
$\bigl(e^{-d(x_i,x_j)}\bigr)_{1 \le i,j \le n}$ is positive definite.
Every subset of $L_p$ for $1\le p \le 2$ is positive definite; this of
course includes subsets of $\ell_p^d$, the space $\R^d$ equipped with
the $\ell_p$ metric for $1 \le p \le 2$.  (See
\cite[Theorem 3.6]{MM-pdms} for a broad list of positive definite
metric spaces.)  If $(X,d)$ is a compact positive definite metric
space, then the \textbf{magnitude} of $X$ is
\begin{equation}
  \label{Eq:mag-def}
  \Mag{X} = \sup\Set{\frac{\left(\sum_{i=1}^n
        w_i\right)^2}{\sum_{i,j=1}^n e^{-d(x_i,x_j)}w_i w_j}}{n \in
    \N, \ x_1, \dots, x_n \in X, \ 0 \neq w \in \R^n}.
\end{equation}
It is an open question whether this supremum is finite for every
compact positive definite metric space.

For $0 \le k \le d$, the intrinsic volumes of a convex body $K
\subseteq \ell_2^d$ can be defined by the Kubota formula
\begin{equation}
  \label{Eq:Kubota}
  V_k(K) = \binom{d}{k} \frac{\omega_d}{\omega_k \omega_{d-k}}
  \int_{\mathrm{Gr}_{d,k}} \vol_k\bigl(\pi_P(K)\bigr) \ d\mu_{d,k}(P),
\end{equation}
where $\mathrm{Gr}_{d,k}$ is the Grassmann manifold of $k$-dimensional
subspaces of $\R^d$, $\mu_{d,k}$ denotes the rotation-invariant probability
measure on $\mathrm{Gr}_{d,k}$, $\pi_P$ denotes the orthogonal
projection onto $P$, and
\[
  \omega_n = \frac{\pi^{n/2}}{\Gamma \left(1 + \frac{n}{2}\right)}
\]
is the volume of the unit ball in $\ell_2^n$; see e.g.\ \cite[p.\
222]{SchWe}. 

The normalization of the intrinsic volumes is chosen such that if
$T:\ell_2^d \to \ell_2^N$ is an isometric embedding and
$K \subseteq \ell_2^d$ is a convex body, then $V_k(T(K)) = V_k(K)$ for
all $0 \le k \le d$. It follows that $V_k(K)$ is well-defined for any
finite-dimensional convex body $K$ in a Hilbert space $\mathcal{H}$.
For a general convex body $K \subseteq \mathcal{H}$, we define
\[
V_k(K) = \sup \Set{V_k(L)}{L \subseteq K \text{ is a
    finite-dimensional convex body}}.
\]

The first main result of this paper is the following.

\begin{thm}
  \label{Thm:l2-magnitude}
  If $K \subseteq \ell_2^d$ is a convex body, then
  \begin{equation}
    \label{Eq:l2-magnitude}
      \Mag{K} \le \sum_{k=0}^d \frac{\omega_k}{4^k} V_k(K),
  \end{equation}
  with equality if $d = 1$.
\end{thm}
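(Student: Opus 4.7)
The plan is to deduce the Euclidean bound from a known analog in $\ell_1^N$, transported by a high-dimensional approximate isometric embedding of $\ell_2^d$ into $\ell_1^N$. The $\ell_1^N$ ingredient I rely on is that for any compact convex body $L \subseteq \ell_1^N$,
\[
\Mag{L} \le \sum_{S \subseteq \{1,\dots,N\}} \frac{1}{2^{|S|}} \vol_{|S|}\bigl(\pi_S(L)\bigr),
\]
where $\pi_S$ denotes projection onto the coordinate subspace indexed by $S$; this is consistent with Leinster's exact product formula $\Mag{\prod_{i=1}^N [0,a_i]} = \prod_{i=1}^N (1 + a_i/2)$ on boxes. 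To embed, I would take i.i.d.\ standard Gaussian vectors $g_1,\dots,g_N \in \R^d$ and set $T_N(x)_i = \frac{1}{N}\sqrt{\pi/2}\,\langle g_i,x\rangle$; the scaling yields $\E\|T_N(x)-T_N(y)\|_1 = \|x-y\|_2$, and the strong law of large numbers gives $\|T_N(x)-T_N(y)\|_1 \to \|x-y\|_2$ almost surely for each fixed pair.

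To transfer the bound, I would fix an arbitrary finite $F = \{x_1,\dots,x_n\} \subseteq K$. Simultaneous a.s.\ convergence of the finitely many pairwise distances, combined with continuity of the magnitude functional on $n$-point positive-definite spaces seen directly from \eqref{Eq:mag-def}, gives $\Mag{T_N(F)} \to \Mag{F}$ almost surely. The uniform bound $\Mag{T_N(F)} \le n$ then lets dominated convergence conclude $\E\Mag{T_N(F)} \to \Mag{F}$. Chaining the trivial $\Mag{T_N(F)} \le \Mag{T_N(K)}$ with the $\ell_1^N$ bound and taking expectations yields
\[
\Mag{F} \le \lim_{N\to\infty} \sum_{k=0}^d \binom{N}{k}\frac{1}{2^k}\,\E\vol_k\bigl(\pi_{S_k}(T_N(K))\bigr),
\]
where $S_k \subseteq \{1,\dots,N\}$ is any representative size-$k$ subset (all are equivalent in law by rotation invariance of the Gaussian). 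Taking the supremum of $\Mag{F}$ over finite $F \subseteq K$ recovers $\Mag{K}$ on the left.

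What remains is to evaluate each coefficient in the limit. For $|S| = k$, the map $\pi_S \circ T_N \cln \R^d \to \R^k$ is realized by a $k \times d$ Gaussian matrix $A$ with i.i.d.\ $N(0,\pi/(2N^2))$ entries, and $\vol_k(A(K)) = |\det(A|_P)|\cdot \vol_k(\pi_P(K))$ for $P = \operatorname{rowspan}(A) \in \mathrm{Gr}_{d,k}$. Orthogonal invariance makes $P$ uniform on $\mathrm{Gr}_{d,k}$ and independent of $|\det(A|_P)|$, so the expectation factors. The Grassmannian integral is identified by \eqref{Eq:Kubota} with $\frac{\omega_k\omega_{d-k}}{\binom{d}{k}\omega_d}V_k(K)$, while the Wishart moment evaluates via $\det(A_0 A_0^T) \sim \prod_{i=1}^k \chi^2_{d-i+1}$ for an i.i.d.\ $N(0,1)$ matrix $A_0$ and the telescoping product $\prod_{i=1}^k \E\chi_{d-i+1} = 2^{k/2}\Gamma((d+1)/2)/\Gamma((d-k+1)/2)$, yielding $\E\sqrt{\det(AA^T)} = \pi^{k/2}/N^k \cdot \Gamma((d+1)/2)/\Gamma((d-k+1)/2)$. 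Multiplying by $\binom{N}{k}\sim N^k/k!$ produces a finite limit, and a short simplification using Legendre duplication $\Gamma((d+1)/2)\Gamma(1+d/2) = \sqrt{\pi}\,d!/2^d$ collapses the remaining gamma and factorial factors to exactly $\omega_k/4^k$, establishing \eqref{Eq:l2-magnitude}. For $d=1$, equality follows immediately from Leinster's formula $\Mag{[a,b]} = 1 + (b-a)/2$, which matches $V_0(K) + (\omega_1/4)V_1(K)$ since $\omega_0 = 1$, $\omega_1 = 2$.

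I expect the main technical obstacle to be the transfer step: verifying $\Mag{T_N(F)} \to \Mag{F}$ in expectation despite exceptional Gaussian realizations that could produce nearly-coinciding points in $T_N(F)$. The deterministic bound $\Mag{T_N(F)} \le |F|$ handles the tails. The subsequent constant chasing is mechanical but delicate, and it is a pleasant arithmetic coincidence that the Wishart, Kubota, and Legendre factors align perfectly to collapse into $\omega_k/4^k$.
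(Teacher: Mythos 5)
Your proposal is correct and follows the same overall strategy as the paper: transfer the known $\ell_1^N$ bound $\Mag{L}\le\sum_k 2^{-k}V_k'(L)$ (Theorem \ref{Thm:l1-magnitude}) through approximate embeddings of $\ell_2^d$ into $\ell_1^N$, then identify the limiting $\ell_1$ intrinsic volumes via Gaussian matrices and the Kubota formula; your Wishart computation reproduces Tsirelson's formula \eqref{Eq:Tsirelson}, and the constants do collapse to $\omega_k/4^k$ exactly as you claim. The implementation differs at two points, in ways that roughly trade off. The paper uses a deterministic embedding (normalized Rademacher sums on the cube $\Omega_{d,n}$, so $N = 2^{nd}$) with a Berry--Esseen estimate giving uniform bi-Lipschitz control, which permits a direct appeal to lower semicontinuity of magnitude under Gromov--Hausdorff convergence; you instead use a random Gaussian embedding, so the metric control is only almost sure and pointwise, and the transfer must pass through finite subsets $F \subseteq K$, the identity $\Mag{K} = \sup_F \Mag{F}$ implicit in \eqref{Eq:mag-def}, and an expectation --- for which Fatou's lemma already suffices, so you do not even need the dominating bound $\Mag{T_N(F)} \le \abs{F}$ (which is nonetheless valid here, since subsets of $\ell_1^N$ are of negative type). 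In exchange, your coefficient computation is exact for every finite $N$ because the matrix is genuinely Gaussian, so you avoid the truncation argument the paper needs in Proposition \ref{Pro:intrinsic-volume-limit} to pass $\E \vol_k\bigl(M_n^t(K)\bigr)$ to its Gaussian limit. One presentational caveat: the $\ell_1^N$ inequality is a substantive theorem that must be invoked as such; consistency with the product formula for boxes is evidence, not a proof, of it.
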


Theorem \ref{Thm:l2-magnitude} can be compared to the erstwhile
conjecture (see \cite{LeWi}, \cite[Conjecture 3.5.10]{Leinster}) that
if $K \subseteq \ell_2^d$ is a convex body, then
\begin{equation}
  \label{Eq:convex-conjecture}
  \Mag{K} = \sum_{k=0}^d \frac{1}{k! \omega_k} V_k(K).
\end{equation}
The explicit computation of magnitude for odd-dimensional Euclidean
balls in \cite{BaCa} showed that \eqref{Eq:convex-conjecture} is false
for $d \ge 5$ (although it does hold if $K$ is a three-dimensional
Euclidean ball).  Since that work, attention has turned to weaker
versions of this conjecture, in particular the question of whether
intrinsic volumes can be recovered from the magnitude function,
defined below. We note that the first two terms of the right hand
sides of both \eqref{Eq:l2-magnitude} and \eqref{Eq:convex-conjecture}
are $1 + \frac{1}{2} V_1(K)$; after that the coefficients in the upper
bound in \eqref{Eq:l2-magnitude} are larger.

A metric space $(X,d)$ is said to be of \textbf{negative type} if
$tX : = (X,td)$ is positive definite for every $t > 0$; examples
include every subset of $L_p$ for $1 \le p \le 2$.  The
\textbf{magnitude function} of a compact metric space of negative type
$X$ is the function $t \mapsto \Mag{tX}$ for $t > 0$.  Since $V_k$ is
homogeneous of degree $k$, \eqref{Eq:l2-magnitude} is equivalent to
the following polynomial upper bound on the magnitude function of a
convex body $K \subseteq \ell_2^d$:
\begin{equation}
  \label{Eq:magnitude-poly}
    \Mag{tK} \le \sum_{k=0}^d \frac{\omega_k}{4^k} V_k(K) t^k
\end{equation}
for $t \ge 0$.

As a consequence of Theorem \ref{Thm:l2-magnitude}, we are able to
show for the first time that some infinite-dimensional subsets of a
Hilbert space have finite magnitude.

\begin{cor}
  \label{Cor:Hilbert}
  Let $X$ be a compact subset of a Hilbert space
  $\mathcal{H}$, and let $K$ be the closed convex hull of $X$.  If
  $V_1(K) < \infty$, then $X$ has finite magnitude.
\end{cor}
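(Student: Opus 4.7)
The plan is to reduce to Theorem \ref{Thm:l2-magnitude} via monotonicity, then control the resulting series of intrinsic volumes using Alexandrov's inequality.

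Monotonicity of magnitude under inclusion is immediate from \eqref{Eq:mag-def}, so $\Mag{X}\le\Mag{K}$; hence it suffices to show $\Mag{K}<\infty$. From \eqref{Eq:mag-def} applied to $K$, we also have $\Mag{K}=\sup_F\Mag{F}$ over finite $F\subseteq K$. Any such $F$ lies in $L:=\conv(F)$, a finite-dimensional convex body in $K$ and thus isometric to a convex body in $\ell_2^{\dim L}$. By monotonicity of magnitude, Theorem \ref{Thm:l2-magnitude}, and the inclusion-monotonicity $V_k(L)\le V_k(K)$ of intrinsic volumes (immediate from the definition of $V_k(K)$ in the infinite-dimensional case),
\[
\Mag{F}\le\Mag{L}\le\sum_{k=0}^{\dim L}\frac{\omega_k}{4^k}V_k(L)\le\sum_{k=0}^\infty\frac{\omega_k}{4^k}V_k(K).
\]

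The key step is the inequality $V_k(K)\le V_1(K)^k/k!$, which guarantees convergence of the right-hand series. To establish this, fix a finite-dimensional convex body $L\subseteq K$ and view it isometrically as a subset of $\R^D$ for some $D\ge\dim L$. Alexandrov's inequality in $\R^D$ gives $(V_k(L)/V_k(B^D))^{1/k}\le V_1(L)/V_1(B^D)$, where $B^D$ is the unit ball in $\R^D$. Using the explicit values $V_k(rB^D)=\binom{D}{k}\omega_D r^k/\omega_{D-k}$ and $V_1(rB^D)=D\omega_D r/\omega_{D-1}$, this rearranges to $V_k(L)\le c(D,k)\,V_1(L)^k/k!$ for an explicit coefficient $c(D,k)$ depending only on $D$ and $k$. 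Since $V_k(L)$ and $V_1(L)$ are intrinsic (invariant under isometric embedding into higher-dimensional Euclidean space), the left-hand side does not depend on $D$ for $D\ge\dim L$; a Stirling-type asymptotic using $\omega_n=\pi^{n/2}/\Gamma(1+n/2)$ shows $\lim_{D\to\infty}c(D,k)=1$, so letting $D\to\infty$ gives $V_k(L)\le V_1(L)^k/k!$. Taking the supremum over finite-dimensional $L\subseteq K$ yields $V_k(K)\le V_1(K)^k/k!$.

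Combining everything,
\[
\Mag{X}\le\Mag{K}\le\sum_{k=0}^\infty\frac{\omega_k\,V_1(K)^k}{4^k\,k!}<\infty,
\]
where convergence of the series follows from the super-exponential decay of $\omega_k/k!$ by Stirling's formula. The main technical obstacle is the gamma-function asymptotic $c(D,k)\to 1$; once that is in hand, the rest is routine manipulation of monotonicity properties.
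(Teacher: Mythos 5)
Your proof is correct and follows the same overall strategy as the paper (monotonicity, reduction to finite\nobreakdash-dimensional convex bodies, Theorem \ref{Thm:l2-magnitude}, the inequality $V_k \le V_1^k/k!$, and convergence of the resulting series), but it differs in two local steps worth noting. First, to reduce to finite dimensions the paper takes finite-dimensional sections $K_n$ of $K$ converging to $K$ in Hausdorff distance and invokes the continuity result \cite[Corollary 2.7]{MM-pdms} to get $\Mag{K}=\lim_n\Mag{K_n}$; you instead observe that \eqref{Eq:mag-def} exhibits $\Mag{K}$ directly as a supremum of $\Mag{F}$ over finite $F\subseteq K$, each of which sits in the finite-dimensional convex body $\conv(F)\subseteq K$. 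Your route is more elementary and avoids the external continuity theorem; it works because the paper's chosen definition of magnitude is already a supremum over finite configurations. Second, the paper simply cites \cite[Theorem 2]{McMullen} for $V_k(K)\le V_1(K)^k/k!$, whereas you rederive it from the Alexandrov inequality $(V_k(L)/V_k(B^D))^{1/k}\le V_1(L)/V_1(B^D)$ by letting the ambient dimension $D\to\infty$ and checking that the normalizing constant $c(D,k)\to 1$; this is essentially the standard proof of McMullen's inequality and your gamma-function asymptotics are right (the powers of $\pi$ cancel and $k!\binom{D}{k}/D^k\to 1$). The only point you should make explicit is that the Alexandrov inequality applies to the lower-dimensional body $L\subseteq\R^D$ (which holds by approximation by full-dimensional bodies and continuity of intrinsic volumes). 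Neither difference affects correctness.
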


Convex bodies $K \subseteq \mathcal{H}$ with $V_1(K) < \infty$ are
referred to as GB (\textbf{Gaussian bounded}) convex bodies due to
their connection with the theory of Gaussian random processes
\cite{Dudley,Chevet} (see also \cite{Vitale1} for discussion,
examples, and further references).  The only previously known examples
of infinite-dimensional metric spaces with finite magnitude were
subsets of infinite-dimensional boxes
$\prod_{i=1}^\infty [0,a_i] \subseteq \ell_1$ for
$\sum_{i=1}^\infty a_i < \infty$; see the first open problem in
\cite[Section 5]{LeMe-survey}.

Another consequence of Theorem \ref{Thm:l2-magnitude} is a new proof,
and partial extension to infinite dimensions, of a surprisingly
nontrivial fact about the behavior of the magnitude when a set in
Euclidean space shrinks to a point.

\begin{cor}
  \label{Cor:point}
  Let $X$ be a nonempty compact subset of a Hilbert space $\mathcal{H}$, and
  let $K$ be the closed convex hull of $X$.  If $V_1(K) < \infty$,
  then
  \[
    \lim_{t \to 0^+} \Mag{tX} = 1.
  \]
  In particular, this holds for any nonempty compact set
  $X \subseteq \ell_2^d$.
\end{cor}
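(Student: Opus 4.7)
The lower bound $\Mag{tX} \ge 1$ is immediate from \eqref{Eq:mag-def} by choosing $n = 1$, so the real task is the upper bound $\limsup_{t \to 0^+} \Mag{tX} \le 1$. Note also that \eqref{Eq:mag-def} yields directly $\Mag{tX} = \sup_F \Mag{tF}$, where $F$ ranges over nonempty finite subsets of $X$.

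Fix such an $F$ and set $L_F := \conv F$, a finite-dimensional convex body contained in $K$. Since every finite configuration appearing in the supremum for $\Mag{tF}$ is also a finite subset of $tL_F$, comparing \eqref{Eq:mag-def} for the two spaces gives $\Mag{tF} \le \Mag{tL_F}$. Applying Theorem~\ref{Thm:l2-magnitude} inside the finite-dimensional affine hull of $L_F$, together with the $k$-homogeneity $V_k(tL) = t^k V_k(L)$, yields
\begin{equation*}
  \Mag{tF} \le \sum_{k=0}^{\dim L_F} \frac{\omega_k}{4^k} V_k(L_F) t^k.
\end{equation*}

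To make this bound uniform in $F$ (in particular, independent of $\dim L_F$, which can be arbitrarily large if $K$ is infinite-dimensional), I would invoke the classical convex-body inequality
\begin{equation*}
  V_k(L) \le \frac{V_1(L)^k}{k!},
\end{equation*}
valid for every finite-dimensional convex body $L$; this follows from Urysohn's inequality for quermassintegrals together with a direct computation on Euclidean balls, and is a standard tool in the theory of GB-sets. Combined with $V_1(L_F) \le V_1(K) < \infty$ (which is built into the supremum definition of $V_1(K)$ in the infinite-dimensional case), this gives the $F$-independent estimate
\begin{equation*}
  \Mag{tF} \le f(t) := \sum_{k=0}^{\infty} \frac{\omega_k}{4^k\, k!} (t V_1(K))^k.
\end{equation*}
The coefficients $\omega_k/(4^k k!)$ decay super-exponentially, so $f$ is an entire function of $t$ with $f(0) = 1$. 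Passing to the supremum over $F$ and then letting $t \to 0^+$ yields $\limsup_{t \to 0^+} \Mag{tX} \le \lim_{t \to 0^+} f(t) = 1$, as required. For the ``in particular'' assertion, when $X \subseteq \ell_2^d$ is compact, $K = \conv X$ is a compact convex body in finite dimension, so $V_1(K)$ is automatically finite and the preceding argument applies.

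The main obstacle is justifying the inequality $V_k(L) \le V_1(L)^k/k!$ cleanly in this setting: it is precisely what keeps the bounding series $f$ convergent uniformly in $\dim L_F$, and without a dimension-free bound of this form the argument would break down in the infinite-dimensional case. Everything after that is a routine pass to the limit inside a uniformly convergent power series.
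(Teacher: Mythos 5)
Your argument is correct and is essentially the paper's own proof: the upper bound there is likewise obtained by combining Theorem \ref{Thm:l2-magnitude} with the dimension-free inequality $V_k(L) \le V_1(L)^k/k!$ (cited in the paper as a consequence of the Alexandrov--Fenchel inequalities, via McMullen) to majorize $\Mag{tK}$ by the entire function $f(t) = \sum_{k=0}^\infty \frac{\omega_k}{4^k k!} V_1(K)^k t^k$ with $f(0)=1$. The only difference is in the reduction to finite dimensions, and it is minor: the paper uses an increasing sequence of finite-dimensional sections $K_n \to K$ in Hausdorff distance together with a continuity result from \cite{MM-pdms} (inside the proof of Corollary \ref{Cor:Hilbert}), whereas you take the supremum over finite subsets directly from \eqref{Eq:mag-def} and pass to their convex hulls, which works equally well.
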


The finite-dimensional case of Corollary \ref{Cor:point} was first
proved in \cite[Theorem 1]{BaCa} using Fourier-analytic techniques and
a potential-theoretic characterization of magnitude in $\ell_2^d$ from
\cite{MM-mag-etc}.  It was reproved in \cite[Corollary
1]{Willerton-ball1} using an exact expression for the magnitude of
odd-dimensional Euclidean balls (stated as Theorem
\ref{Thm:Willerton-ball} below).  The corresponding result for subsets
of $\ell_1^d$ is much simpler (see \cite[Proposition
4.4]{LeMe-survey}).  On the other hand, there exists a six-point
metric space of negative type $(X,d)$ for which
$\lim_{t \to 0^+} \Mag{tX} = 6/5$ \cite[Example 2.2.8]{Leinster}.

Theorem \ref{Thm:l2-magnitude} and Corollaries \ref{Cor:Hilbert} and
\ref{Cor:point} will be proved in section \ref{S:l2-magnitude}.

For odd-dimensional Euclidean balls, the upper bound in Theorem
\ref{Thm:l2-magnitude} --- and therefore the previously conjectured
formula \eqref{Eq:convex-conjecture} --- also captures the correct
first-order behavior of the magnitude function as $t \to 0$, as the
following theorem shows.

\begin{thm}
  \label{Thm:ball-derivative}
  Suppose that $d$ is odd, and let $B_2^d$ denote the Euclidean unit
  ball in $\ell_2^d$. Then
  \[
    \lim_{t \to 0^+} \frac{\Mag{tB_2^d} - 1}{t} = \frac{1}{2} V_1(B_2^d).
  \]
\end{thm}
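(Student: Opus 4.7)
The plan is to sandwich the limit between matching upper and lower bounds. The upper bound is essentially immediate from Theorem~\ref{Thm:l2-magnitude}: the polynomial form \eqref{Eq:magnitude-poly} applied to $K = B_2^d$ yields, since $\omega_0 = 1$ and $\omega_1 = 2$,
\[
  \Mag{tB_2^d} \le 1 + \tfrac12 V_1(B_2^d)\, t + \sum_{k=2}^{d} \frac{\omega_k}{4^k} V_k(B_2^d)\, t^k .
\]
Because $d$ is fixed, the tail sum is $O(t^2)$ as $t \to 0^+$, so subtracting $1$, dividing by $t$, and taking $\limsup$ gives
\[
  \limsup_{t \to 0^+} \frac{\Mag{tB_2^d} - 1}{t} \le \tfrac12 V_1(B_2^d).
\]

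For the matching lower bound I would invoke the explicit closed-form expression for $\Mag{tB_2^d}$ in odd dimensions due to Barcel\'o--Carbery and Willerton, recorded as Theorem~\ref{Thm:Willerton-ball} later in the paper. That result presents $t \mapsto \Mag{tB_2^d}$ as an explicit analytic function of $t$ (in particular smooth at $t = 0$, with value $1$). I would differentiate this expression and evaluate at $t = 0$, and then compare the value with
\[
  \tfrac12 V_1(B_2^d) \;=\; \frac{d\,\omega_d}{2\,\omega_{d-1}},
\]
where the second equality follows from the Kubota formula \eqref{Eq:Kubota} using that each orthogonal projection of $B_2^d$ onto a line is a segment of length~$2$. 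Combined with the upper bound, matching values would establish the claimed limit.

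The main obstacle is the derivative computation in the second step: the Barcel\'o--Carbery formula for $\Mag{tB_2^d}$ becomes combinatorially elaborate as the odd dimension $d$ grows, so extracting its first-order Taylor coefficient in a form that visibly matches $\frac{d\omega_d}{2\omega_{d-1}}$ is not apparent from the raw expression. I expect this step to require either an inductive argument on the odd dimension, a suitable rewriting of the formula (for instance in the style of Willerton's reformulation using a differential operator), or the recognition of a generating-function identity that simplifies the coefficient of $t$ in closed form. Once that identification is completed, the theorem follows by sandwiching.
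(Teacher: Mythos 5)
Your strategy is the right one, and it is in fact the paper's: the claimed limit is obtained by differentiating Willerton's exact formula (Theorem \ref{Thm:Willerton-ball}) at $t=0$ and matching the result with $\tfrac12 V_1(B_2^d)$, which you correctly identify as $\frac{d\,\omega_d}{2\,\omega_{d-1}} = 2\binom{m-\frac12}{m}^{-1}$ for $d=2m+1$. (The sandwich with Theorem \ref{Thm:l2-magnitude} is harmless but unnecessary: Willerton's formula exhibits $\Mag{tB_2^d}$ as a ratio of polynomials in $t$ whose denominator is nonzero at $t=0$ and whose value at $t=0$ is $1$, so the limit in question is simply the derivative of that rational function at $0$.)

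The genuine gap is that the derivative computation you defer \emph{is} the entire content of the proof, and you have not carried it out; you only list possible strategies (``an inductive argument\ldots a suitable rewriting\ldots a generating-function identity''). The paper's execution goes as follows. Writing $N(t)$ and $D(t)$ for the numerator and denominator in Theorem \ref{Thm:Willerton-ball}, the quotient rule together with Willerton's identity $N(0)=d!\,D(0)$ reduces the problem to $\bigl(N'(0)-d!\,D'(0)\bigr)/N(0)$, where $N'(0)$ and $D'(0)$ are sums over disjoint collections with exactly one flat step. Willerton's shift map $\mu: X_{m-1}\to X_{m+1}$ satisfies $\prod_{\tau\in\mu(\sigma)}w_2(\tau)=d!\prod_{\tau\in\sigma}w_0(\tau)$ and preserves the number of flat steps, so the $d!\,D'(0)$ term cancels against the part of $N'(0)$ coming from $\mu(X^1_{m-1})$; one must then explicitly classify the remaining collections in $X^1_{m+1}\setminus\mu(X^1_{m-1})$ (they are the $\sigma^{m+1}_{1,q}$ and $\sigma^{m+1}_{p,m+1-p}$), compute each weight ratio as a product of the form $\prod_{j=1}^q[2(p+j)-2]/\prod_{j=0}^q[2(p+j)-1]$, and evaluate the two resulting sums via the identities $\sum_{k=0}^m\binom{k-\frac12}{k}=\binom{m+\frac12}{m}$ and $\sum_{q=0}^{m-1}\binom{q+\frac12}{q}^{-1}=\binom{m-\frac12}{m}^{-1}-1$, proved by induction. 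Without some version of this combinatorial cancellation and the closed-form evaluation of the resulting sums, the theorem is not proved; as it stands, your proposal establishes only the upper bound $\limsup_{t\to 0^+}(\Mag{tB_2^d}-1)/t\le\tfrac12 V_1(B_2^d)$.
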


Theorem \ref{Thm:ball-derivative} was conjectured by Simon Willerton
in response to a question by the author, on the basis of computer
calculations using the results of \cite{Willerton-ball1}. The result 
suggests the following conjecture (which would have followed from
\eqref{Eq:convex-conjecture} if that conjecture had been true).

\begin{conj}
  \label{Con:V1}
  If $K \subseteq \ell_2^d$ is a convex body, then
  \begin{equation}
    \label{Eq:derivative}
    \lim_{t \to 0^+} \frac{\Mag{tK} - 1}{t} = \frac{1}{2} V_1(K).
  \end{equation}
\end{conj}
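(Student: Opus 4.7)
The upper bound $\limsup_{t \to 0^+} \frac{\Mag{tK} - 1}{t} \le \frac{1}{2} V_1(K)$ is immediate from Theorem~\ref{Thm:l2-magnitude}: since $\omega_1 = 2$, the polynomial estimate \eqref{Eq:magnitude-poly} yields
\[
  \Mag{tK} - 1 \le \frac{1}{2} V_1(K)\, t + \sum_{k=2}^d \frac{\omega_k}{4^k} V_k(K)\, t^k,
\]
and dividing by $t$ and letting $t \to 0^+$ gives the bound. The substantive content of the conjecture is thus the matching lower bound.

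To pursue the lower bound, my plan is to exploit the variational characterization of magnitude. For compact positive definite subsets of $\ell_2^d$, the potential-theoretic description in \cite{MM-mag-etc} expresses $\Mag{tK}$ as a supremum of ratios $\mu(K)^2 / \int\int e^{-t\|x-y\|}\, d\mu(x)\, d\mu(y)$ over signed Borel measures on $K$ with $\mu(K) \ne 0$. For a fixed $\mu$ with $\mu(K) = 1$, a first-order expansion in $t$ gives $\Mag{tK} \ge 1 + t \int\int \|x-y\|\, d\mu(x)\, d\mu(y) + O(t^2)$. Hence it would suffice to construct a family of (possibly $t$-dependent) test measures $\mu_t$ whose ``distance energies'' $E_1(\mu_t) := \int\int \|x-y\|\, d\mu_t(x)\, d\mu_t(y)$ tend to $\frac{1}{2} V_1(K)$ as $t \to 0^+$.

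The one-dimensional case is suggestive: for $K = [0, L]$ the measure $\mu = \frac{1}{2}(\delta_0 + \delta_L)$ attains $E_1(\mu) = L/2 = V_1(K)/2$ exactly. Guided by the Cauchy--Kubota formula expressing $V_1(K)$ as an integral of one-dimensional widths over $\sph^{d-1}$, a natural higher-dimensional candidate is a measure on $\partial K$ built from the Gauss map, designed so that---after decomposing $\|x-y\|$ directionally and integrating---the total energy reproduces $\frac{1}{2} V_1(K)$. If the supremum of $E_1(\mu)$ over $t$-independent $\mu$ happens to fall strictly short of $\frac{1}{2} V_1(K)$, one must instead let $\mu_t$ depend on $t$ and exploit higher-order cancellations in the expansion of $e^{-t\|x-y\|}$.

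The principal obstacle is matching the coefficient $\frac{1}{2}$ exactly: signed measures admit delicate cancellations and, without invoking Theorem~\ref{Thm:l2-magnitude}, there is no a priori bound on $E_1(\mu)$. An alternative route is to approximate $K$ by polytopes, or to inscribe large odd-dimensional Euclidean balls (to which Theorem~\ref{Thm:ball-derivative} applies), and then combine monotonicity of magnitude under inclusion with the continuity of $V_1$ as a valuation; but transferring the sharp constant from the known special cases (segment or odd-dimensional ball) to a general convex body appears to require a genuinely new integral-geometric or Hadwiger-type input, and this is likely to be the hard step in either route.
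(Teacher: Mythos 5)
The statement you are asked to prove is stated in the paper as an open conjecture (Conjecture \ref{Con:V1}); the paper does not prove it, and neither does your proposal. What the paper does establish is exactly the two fragments you identify: the upper bound $\limsup_{t\to 0^+}(\Mag{tK}-1)/t \le \frac{1}{2}V_1(K)$, derived from \eqref{Eq:magnitude-poly} precisely as you do, and a partial lower bound (Corollary \ref{Cor:mean-width}) obtained by your second suggested route --- inscribing odd-dimensional Euclidean balls and invoking Theorem \ref{Thm:ball-derivative} together with monotonicity \eqref{Eq:monotone}. That route, however, only yields $c\sqrt{k}\,\inrad(K\cap E)$, which is far below $\frac{1}{2}V_1(K)$ for, say, thin slabs or simplices, so it cannot close the conjecture; the full statement is only known for odd-dimensional balls, where it rests on Willerton's exact combinatorial formula (Theorem \ref{Thm:Willerton-ball}) rather than on any variational argument.

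On your main proposed route via test measures: the expansion $\Mag{tK} \ge 1 + t\,E_1(\mu) + O(t^2)$ is legitimate for a \emph{fixed} signed measure $\mu$ with $\mu(K)=1$ and finite energy, but the $O(t^2)$ term depends on $\mu$ (through $\int\!\!\int \norm{x-y}^2\,d\abs{\mu}\,d\abs{\mu}$ and the total variation of $\mu$), so the scheme with $t$-dependent measures $\mu_t$ requires uniform control that you have not supplied. More fundamentally, even the weaker claim that $\sup_\mu E_1(\mu) = \frac{1}{2}V_1(K)$ over unit-mass signed measures is not known for $d \ge 2$; your one-dimensional example does not transfer, because the Cauchy--Kubota decomposition of $\norm{x-y}$ into directional widths does not produce a single measure on $\partial K$ whose energy integrates to $\frac{1}{2}V_1(K)$ (the optimal endpoint measure in each direction depends on the direction). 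You correctly flag this as "the hard step," and it is: the proposal is a reasonable research plan, but it contains no proof of the lower bound, which is the entire content of the conjecture.
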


If $d$ is odd and $X \subseteq \ell_2^d$ is the closure of a bounded
open set with smooth boundary, then \cite[Theorem 2]{GiGo} shows that
the magnitude function of $X$ has a meromorphic continuation to $\C$.
Corollary \ref{Cor:point} implies that this continuation does not have
a pole at $0$, and is thus analytic in a neighborhood of $0$.  In
particular, if $d$ is odd and $K$ is a smooth convex body with
nonempty interior, then the limit in \eqref{Eq:derivative} does exist.

Theorems \ref{Thm:l2-magnitude} and \ref{Thm:ball-derivative} can be
combined to prove a partial result in the direction of Conjecture
\ref{Con:V1}. The following result extends to the infinite-dimensional
setting if $K$ is a GB body, but for simplicity we state it here in
finite dimensions only.  We denote by $A_{d,k}$ the set of
$k$-dimensional affine subspaces of $\R^d$, and for $E \in A_{d,k}$ we
let $\inrad(K \cap E)$ be the largest radius of a $k$-dimensional
Euclidean ball contained in $K \cap E$.

\begin{cor}
  \label{Cor:mean-width}
  There is an absolute constant $c > 0$ such that if $K \subseteq \ell_2^d$ is
  a convex body, then
  \begin{equation*}
    \begin{split}
     c \max_{\substack{1 \le k \le d,\\k \text{ odd}}} 
      \sup_{E \in A_{d,k}} \sqrt{k} \inrad(K \cap E)
      & \le \liminf_{t \to 0^+}
      \frac{\Mag{tK}-1}{t} \\
      & \le \limsup_{t \to 0^+} \frac{\Mag{tK}-1}{t} \le \frac{V_1(K)}{2}.
    \end{split}
  \end{equation*}
\end{cor}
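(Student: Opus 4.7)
The plan is to treat the two inequalities separately, since each falls out of a theorem already established in the paper, joined by a basic monotonicity observation.

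For the upper bound, I would read off \eqref{Eq:magnitude-poly} from Theorem \ref{Thm:l2-magnitude} and extract the first two terms: using $\omega_0 V_0(K) = 1$ and $\omega_1/4 = 1/2$, we get $\Mag{tK} \le 1 + \tfrac{1}{2}V_1(K)\,t + O(t^2)$ as $t\to 0^+$. Dividing by $t$ and taking the limsup gives $\limsup_{t\to 0^+}(\Mag{tK}-1)/t \le V_1(K)/2$ at once.

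For the lower bound I would fix an odd $k$ with $1\le k\le d$ and an affine subspace $E\in A_{d,k}$, set $r=\inrad(K\cap E)$, and note that $K$ contains a $k$-dimensional Euclidean ball $B$ of radius $r$ isometric to $rB_2^k$. The key structural fact is that magnitude is monotone under inclusion for compact positive definite metric spaces: this is immediate from the supremum definition \eqref{Eq:mag-def}, since every admissible finite configuration in $tB$ is also admissible in $tK$. Hence $\Mag{tK}\ge\Mag{tB}=\Mag{(tr)B_2^k}$ for every $t>0$, and Theorem \ref{Thm:ball-derivative} applied to the scaled ball yields
\[
\liminf_{t\to 0^+}\frac{\Mag{tK}-1}{t}\;\ge\;\lim_{t\to 0^+}\frac{\Mag{(tr)B_2^k}-1}{t}\;=\;\frac{r}{2}V_1(B_2^k).
\]

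To convert this into the stated form I need an absolute lower bound $V_1(B_2^k)\ge c_1\sqrt{k}$. Applying the Kubota formula \eqref{Eq:Kubota} with $k=1$ to $B_2^k$, and using that the orthogonal projection of $B_2^k$ onto any line is an interval of length $2$, gives the closed form $V_1(B_2^k)=k\,\omega_k/\omega_{k-1}$; a standard Stirling estimate then yields $\omega_k/\omega_{k-1}\ge c_1/\sqrt{k}$ for an absolute constant $c_1>0$, hence $V_1(B_2^k)\ge c_1\sqrt{k}$. Taking the supremum over $E\in A_{d,k}$ and the maximum over odd $k$ produces the required lower bound with $c=c_1/2$. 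No step is genuinely hard once Theorems \ref{Thm:l2-magnitude} and \ref{Thm:ball-derivative} are in hand; the only point worth flagging is to recognize that monotonicity of magnitude, which need not hold in full generality, is trivial in the positive definite setting and is precisely what lets us reduce the general convex body case to the inscribed ball handled by Theorem \ref{Thm:ball-derivative}.
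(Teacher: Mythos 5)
Your proposal is correct and follows essentially the same route as the paper: the upper bound is read off from \eqref{Eq:magnitude-poly}, and the lower bound combines the monotonicity \eqref{Eq:monotone} with Theorem \ref{Thm:ball-derivative} applied to an inscribed ball $\inrad(K\cap E)B_2^k$, together with the estimate $V_1(B_2^k)\ge c_1\sqrt{k}$. Your explicit verification of $V_1(B_2^k)=k\,\omega_k/\omega_{k-1}\gtrsim\sqrt{k}$ is a correct filling-in of a step the paper leaves implicit.
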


The limits inferior and superior in Corollary \ref{Cor:mean-width} are
necessarily both homogeneous of degree $1$ as functions of $K$, as are
the stated upper and lower bounds. It is not a priori obvious, however,
that the limits inferior and superior are finite and nonzero.  We
remark that \cite[Theorem 1.1]{HeHe} proves a lower bound on intrinsic
volumes of a convex body of similar nature to the lower bound in
Corollary \ref{Cor:mean-width}.

Theorem \ref{Thm:ball-derivative} and Corollary \ref{Cor:mean-width}
will be proved in section \ref{S:ball-derivative}.

\bigskip

On the other side, for any compact $X \subseteq \ell_2^d$, $\Mag{X}
\ge \frac{\vol_d(X)}{d! \omega_d}$ \cite[Theorem 3.5.6]{Leinster} and
\begin{equation}
  \label{Eq:BaCa}
  \lim_{t \to \infty} \frac{\Mag{tX} }{t^d} = \frac{\vol_d(X)}{d!
    \omega_d}
\end{equation}
\cite[Theorem 1]{BaCa} (which was consistent with the formerly
conjectured formula \eqref{Eq:convex-conjecture}). Thus our polynomial
upper bound \eqref{Eq:magnitude-poly} captures the correct order of
growth of $\Mag{tK}$ as $t \to \infty$ when $K$ has nonempty interior,
but with the wrong constant if $K$ is greater than one-dimensional.

When $X \subseteq \ell_2^d$ is the closure of a bounded, open set with
smooth boundary and $d \ge 3$ is odd, there is the finer asymptotic
expansion
\begin{equation}
  \label{Eq:GiGo}
  \begin{split}
  \Mag{tX} = & \frac{1}{d! \omega_d} \left(\vol_d(X) t^d + \frac{d+1}{2}
    \vol_{d-1}(\partial X) t^{d-1} + \frac{(d-1)(d+1)^2}{8}
    \left(\int_{\partial X} H \ dS \right) t^{d-2} \right) \\
  & + O\bigl(t^{d-3}\bigr)
  \end{split}
\end{equation}
as $t \to \infty$ \cite{GiGo}.  Here $H$ is the mean curvature on
$\partial X$ and $S$ is the surface area measure.  When $K
\subseteq \ell_2^d$ is a convex body with nonempty interior
and smooth boundary, \eqref{Eq:GiGo} becomes
\begin{equation*}
  \begin{split}
  \Mag{tK} = & \frac{1}{d! \omega_d} \left(V_d(K) t^d + (d+1)
    V_{d-1}(K) t^{d-1} + \frac{\pi}{4}(d+1)^2
    V_{d-2}(K) t^{d-2} \right) \\
  & + O\bigl(t^{d-3}\bigr).
  \end{split}
\end{equation*}
This implies that $V_{d-1}(K)$ and $V_{d-2}(K)$ can also be recovered
from the magnitude function of $K$. It also shows that, although the
upper bound in \eqref{Eq:magnitude-poly} only matches the
$t \to \infty$ asymptotics of the magnitude function of $K$ in a rough
sense, the dependence of the three top-order terms on $K$ is,
intriguingly, correct up to scalar multiples.  However, the next term
in the asymptotic expansion \eqref{Eq:GiGo} turns out not to be a
multiple of an intrinsic volume \cite{Goffeng-email}.

\section{Proofs of Theorem \ref{Thm:l2-magnitude} and its corollaries,
  and some related questions}
\label{S:l2-magnitude}

Theorem \ref{Thm:l2-magnitude} follows from a similar result for
magnitude of convex bodies in $\ell_1^N$. For $0 \le k \le N$, the
$\ell_1$ intrinsic volumes of a convex body
$K \subseteq \ell_1^N$ are defined by
\[
V_k'(K) = \sum_{P \in \mathrm{Gr}'_{N,k}} \vol_k\bigl(\pi_P(K)\bigr),
\]
where $\mathrm{Gr}'_{N,k}$ denotes the set of $k$-dimensional
coordinate subspaces of $\R^N$ and $\pi_P$ denotes the coordinate
projection onto $P$ \cite{Leinster-int}.  (In fact, the natural class
of sets to consider is somewhat larger than convex bodies, but this
point will not be used here.) Note that if $K$ lies in a
$d$-dimensional subspace of $\ell_1^N$, then $V_k'(K) = 0$ for
$k > d$.

\begin{thm}[{\cite[Theorem 4.6]{LeMe-survey}}]
  \label{Thm:l1-magnitude}
  If $K \subseteq \ell_1^N$ is a convex body, then
  \begin{equation}
    \label{Eq:l1-magnitude}
      \Mag{K} \le \sum_{k=0}^N \frac{1}{2^k} V_k'(K),
  \end{equation}
  with equality if $K$ has nonempty interior, or if $N = 2$.
\end{thm}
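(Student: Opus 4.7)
The plan is to reduce the inequality to the case of axis-aligned boxes, where magnitude admits an exact product formula, and then extend to arbitrary convex bodies via an approximation argument compatible with the valuation property of the $\ell_1$ intrinsic volumes.

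First I would handle the box case. For $K = \prod_{i=1}^N [0, a_i] \subseteq \ell_1^N$, the similarity kernel factors as $e^{-d(x,y)} = \prod_{i=1}^N e^{-\abs{x_i - y_i}}$. Using weight vectors built as tensor products of one-dimensional weights, one verifies that magnitude is multiplicative over $\ell_1$-products, so $\Mag{K} = \prod_{i=1}^N \Mag{[0, a_i]} = \prod_{i=1}^N (1 + a_i/2)$. Expanding this product and matching with $V_k'(K) = \sum_{\abs{S} = k} \prod_{i \in S} a_i$ (using that coordinate projections of a box are smaller boxes) yields $\Mag{K} = \sum_{k=0}^N 2^{-k} V_k'(K)$ exactly, so the bound is tight on boxes.

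To extend to a general convex body $K$, I would approximate it from outside by finite unions $U_n = \bigcup_j B_j^{(n)}$ of axis-aligned boxes (for instance, grid cubes) with $U_n \downarrow K$ in the Hausdorff metric. Two ingredients then come into play: an inclusion-exclusion inequality for magnitude bounding $\Mag{U_n}$ by an alternating sum over intersections of the $B_j^{(n)}$'s, together with the classical valuation property of $V_k'$ giving $V_k'(U_n)$ as the matching alternating sum. Since each intersection of axis-aligned boxes is itself a box where the previous step applies exactly, combining these yields $\Mag{U_n} \le \sum_{k=0}^N 2^{-k} V_k'(U_n)$. Passing to the limit, using continuity of magnitude along decreasing Hausdorff-convergent sequences of compact positive definite spaces together with continuity of $V_k'$ on convex bodies, gives the stated inequality for $K$. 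For the equality case with nonempty interior, I would construct a matching lower bound by exhibiting an explicit weight measure distributed across faces of $K$ of all dimensions, with face-dimension-dependent densities; the case $N = 2$ can alternatively be handled by a direct planar computation.

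The main obstacle is the inclusion-exclusion inequality for magnitude: while monotonicity is known in $\ell_1^N$, a valuation-type inequality for magnitude on non-convex unions of $\ell_1$-convex sets requires a delicate argument specific to the $\ell_1$ structure. A cleaner route might bypass this step by establishing a direct integral-geometric identity --- an $\ell_1$ analog of Kubota's formula --- expressing $\Mag{K}$ as an integral over coordinate projections of $K$, which would mirror the definition of $V_k'$ on the nose and give the bound in a single step.
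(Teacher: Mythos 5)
First, a point of orientation: the paper does not prove this statement — it is quoted from \cite[Theorem 4.6]{LeMe-survey} — so there is no in-paper argument to match yours against; the comparison below is with the proof in that reference. Your box step is fine: multiplicativity of magnitude over $\ell_1$-products together with $\Mag{[0,\ell]} = 1 + \ell/2$ gives $\Mag{\prod_i[0,a_i]} = \prod_i(1+a_i/2) = \sum_k 2^{-k}V_k'$. The extension to general convex bodies, however, rests on two facts that are not available. The ``inclusion--exclusion inequality for magnitude'' over a finite union of grid cubes is not a known property of magnitude: magnitude is not a valuation, and the known inclusion--exclusion results hold only under restrictive gating/projection hypotheses on the pieces and all of their higher-order intersections; establishing such an inequality for arbitrary pixellated outer approximations is essentially as hard as the theorem itself, and you have correctly flagged it as the main obstacle without supplying it. Second, ``continuity of magnitude along decreasing Hausdorff-convergent sequences'' is an open problem: Section \ref{S:l2-magnitude} of this very paper emphasizes that magnitude is only known to be \emph{lower} semicontinuous in the Gromov--Hausdorff topology, and that Hausdorff-continuity of magnitude on convex bodies in $\ell_1^N$ is equivalent to the still-open conjecture that equality holds in \eqref{Eq:l1-magnitude} for \emph{all} convex bodies. (For the inequality you only need monotonicity, $\Mag{K}\le\Mag{U_n}$, so that appeal is repairable; but you would still need $V_k'(U_n)\to V_k'(K)$ for the non-convex sets $U_n$, where $V_k'$ is defined only by valuation extension and its Hausdorff-continuity is itself delicate.)

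The proof in the cited reference is structured in the opposite order and avoids non-convex sets entirely. One first proves \emph{equality} for convex bodies with nonempty interior, via the potential-theoretic characterization of magnitude in $\ell_1^N$: magnitude is the minimal squared norm, in a tensor-power Sobolev-type space whose reproducing kernel is $2^{-N}e^{-\norm{x-y}_1}$, of a potential equal to $1$ on $K$; the function $h = e^{-\dist_1(\cdot,K)}$ is such a potential, the associated weight distribution $2^{-N}\prod_i(I-\partial_i^2)h$ is a genuine measure concentrated on $K$ when $K$ is full-dimensional (this is the ``explicit weight measure distributed across faces of all dimensions'' you allude to, and it is where the real work lies), and its total mass is $\vol_N\bigl(\tfrac12 K + [0,1]^N\bigr) = \sum_k 2^{-k}V_k'(K)$ by the $\ell_1$ Steiner formula. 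The inequality for lower-dimensional $K$ then follows by outer approximation with full-dimensional \emph{convex} bodies such as $K + \eps B_1^N$, using only monotonicity of magnitude and continuity of $V_k'$ on convex bodies --- no inclusion--exclusion for magnitude is needed. Your closing suggestion of a direct integral-geometric identity is closer in spirit to this route than your main plan, but the identity that does the job is the Steiner-type volume formula for $\tfrac12 K + [0,1]^N$, not an $\ell_1$ analogue of Kubota's formula.
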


We note that, by the $\ell_1$ analogue of Steiner's formula
\cite[Theorem 6.2]{Leinster-int}, the right hand side of
\eqref{Eq:l1-magnitude} is equal to
$\vol_N \bigl(\frac{1}{2}K + [0,1]^N\bigr)$.  There does not appear to
be such a simple interpretation of the upper bound in
\eqref{Eq:l2-magnitude}.

The idea of the proof of Theorem \ref{Thm:l2-magnitude} is to
approximate the Euclidean space $\ell_2^d$ by subspaces of $\ell_1^N$
for large $N$, and show that the $\ell_1$ intrinsic volumes
approximate scalar multiples of the classical intrinsic volumes in
those subspaces.

Let $\Omega_{d,n} = (\{-1,1\}^n)^d$, equipped with the uniform
probability measure $\Prob_{d,n}$.  We will consider
$L_1(\Omega_{d,n}) = L_1(\Omega, \Prob_{d,n})$ and
$\ell_1(\Omega_{d,n}) \cong \ell_1^{2^{nd}}$, which are both the space
of functions $f:\Omega_{d,n} \to \R$ but with different norms:
\[
  \norm{f}_{L_1} =
  \E_{d,n} \abs{f} =
  \frac{1}{2^{nd}} \sum_{x \in \Omega_{d,n}} \abs{f(x)}
   = \frac{1}{2^{nd}} \norm{f}_{\ell_1}.
\]

For $1 \le i \le d$ and $1 \le j \le n$, define
$X_{i,j} = X^{(d,n)}_{i,j} : \Omega_{d,n} \to \R$ by
$X_{i,j}(x) = x_{i,j}$. Then, with respect $\Prob_{d,n}$,
$\Set{X_{i,j}}{1 \le i \le d,\ 1 \le j \le n}$ are independent,
identically distributed random variables with
$\Prob_{d,n}[X_{i,j} = 1] = \Prob_{d,n}[X_{i,j} = -1] = 1/2$.

We next define
\[
S_i^n = \frac{1}{\sqrt{n}} \sum_{j=1}^n X_{i,j}
\]
for $1 \le i \le d$, and define a linear map
$T_d^n:\ell_2^d \to L_1(\Omega_{d,n})$ by $T_d^n(e_i) = S_i^n$. We
also write $\widetilde{T}_d^n = \sqrt{\frac{\pi}{2}}2^{-nd} T_d^n$, so
that
\[
\norm{\widetilde{T}_d^n(y)}_{\ell_1} = \sqrt{\frac{\pi}{2}} \norm{T_d^n(y)}_{L_1}.
\]

To deduce Theorem \ref{Thm:l2-magnitude} from Theorem
\ref{Thm:l1-magnitude}, we will use two technical results, both of
which are applications of the central limit theorem.

\begin{lemma}
  \label{Lem:clt}
  For every $d$, $n$, and nonzero $y \in \R^d$,
  \[
    1 - \frac{4}{\sqrt{n}} \le 
    \frac{\norm{\widetilde{T}_d^n(y)}_{\ell_1}}{\norm{y}_2}
        \le 1 + \frac{4}{\sqrt{n}}.
  \]
\end{lemma}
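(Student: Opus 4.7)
The plan is a quantitative central limit theorem. By degree-$1$ homogeneity of both sides, assume $\norm{y}_2 = 1$, and set
\[
W = T_d^n(y) = \sum_{i=1}^d \sum_{j=1}^n \frac{y_i}{\sqrt n}\,X_{i,j},
\]
a sum of $nd$ independent centered bounded random variables with total variance $\norm{y}_2^2 = 1$. Using $\norm{f}_{\ell_1} = 2^{nd}\norm{f}_{L_1}$ and $\widetilde{T}_d^n = \sqrt{\pi/2}\,2^{-nd}T_d^n$, the claim is equivalent to
\[
\Bigl|\sqrt{\pi/2}\,\E\abs{W} - 1\Bigr| \le \frac{4}{\sqrt n};
\]
since $\E\abs{Z} = \sqrt{2/\pi}$ for a standard Gaussian $Z$, it suffices to bound $\bigl|\E\abs{W}-\E\abs{Z}\bigr| \le C/\sqrt n$ for an absolute constant $C$.

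My approach would be the Fourier-analytic route based on the identity $\abs{x} = \tfrac{2}{\pi}\int_0^\infty (1-\cos(tx))/t^2\,dt$, which gives
\[
\E\abs{W} - \E\abs{Z} = \frac{2}{\pi}\int_0^\infty \frac{e^{-t^2/2} - \phi_W(t)}{t^2}\,dt,
\qquad \phi_W(t) = \prod_{i=1}^d \cos\!\Bigl(\tfrac{ty_i}{\sqrt n}\Bigr)^n.
\]
I would split the $t$-integral into three pieces. On the short range $\abs{t} \lesssim n^{1/4}$, a Taylor expansion of $\log\cos$ combined with $\sum_i y_i^4 \le (\sum_i y_i^2)^2 = 1$ gives $\abs{\phi_W(t) - e^{-t^2/2}} \lesssim (t^4/n)e^{-t^2/2}$, contributing $O(1/n)$. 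On the intermediate range $n^{1/4} \lesssim \abs{t} \le \pi\sqrt n/2$, the elementary inequality $\abs{\cos u}\le e^{-u^2/2}$ for $\abs{u}\le \pi/2$ (applicable since $\abs{ty_i/\sqrt n}\le \pi/2$ there) gives $\abs{\phi_W(t)}\le e^{-t^2/2}$, producing a Gaussian-tail contribution that is negligibly small. Finally, for $\abs{t}>\pi\sqrt n/2$ I would simply use $\abs{e^{-t^2/2}-\phi_W(t)} \le 2$ and integrate $1/t^2$, which supplies the dominant $O(1/\sqrt n)$ term.

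The main obstacle will be extracting the explicit numerical constant $4$ rather than an unspecified absolute constant; this requires careful bookkeeping through the three ranges above, and the value $4$ is presumably chosen generously to absorb the constants produced by the optimized splitting. An alternative, cleaner route is to invoke a Wasserstein-$1$ Berry--Esseen bound for sums of independent bounded variables (e.g.\ via Stein's method): this yields
\[
\bigl|\E\abs{W}-\E\abs{Z}\bigr| \le W_1(W,Z) \lesssim \sum_{i,j}\E\abs{y_i X_{i,j}/\sqrt n}^3 = \frac{1}{\sqrt n}\sum_{i=1}^d \abs{y_i}^3 \le \frac{1}{\sqrt n},
\]
where the last inequality uses $\abs{y_i}\le 1$. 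Only the $1/\sqrt n$ order is used downstream, so the precise value of the constant is not essential.
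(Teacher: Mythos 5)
Your proposal is correct, and your primary (Fourier-analytic) route is genuinely different from the paper's. The paper simply cites a Berry--Esseen theorem for Lipschitz test functions with explicit constant $3$ (Esseen; see Goldstein), applies it to $f(t)=\abs{t}$, and gets $\bigl|\E\abs{W}-\sqrt{2/\pi}\bigr|\le 3 \sum_{i,j}\abs{y_i/\sqrt{n}}^3 \le 3/\sqrt{n}$, whence the constant $4 > 3\sqrt{\pi/2}\approx 3.76$ after rescaling --- so your ``alternative, cleaner route'' at the end is precisely the paper's argument, and the Wasserstein-$1$ bound you would invoke is available in the literature with the needed explicit constant. Your main route via $\abs{x}=\tfrac{2}{\pi}\int_0^\infty (1-\cos(tx))t^{-2}\,dt$ and the factorization $\phi_W(t)=\prod_i \cos(ty_i/\sqrt{n})^n$ is sound: the three-range splitting works exactly as you describe (the short range contributes $O(1/n)$, the middle range is exponentially small via $\abs{\cos u}\le e^{-u^2/2}$ on $[-\pi/2,\pi/2]$, and the tail $\int_{\pi\sqrt{n}/2}^\infty 2t^{-2}\,dt$ gives the dominant $O(1/\sqrt{n})$), and it has the virtue of being self-contained, exploiting the symmetry and boundedness of Rademacher sums rather than quoting an external theorem. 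What it costs is the bookkeeping needed to certify the specific constant $4$; you correctly observe that only the $O(1/\sqrt{n})$ rate matters downstream (the lemma is used solely to get Lipschitz convergence of $\widetilde{T}_d^n(X)$ to $X$), but to prove the lemma \emph{as stated} you would still need to carry the constants through, which your sketch leaves open while the paper's citation settles it in one line.
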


\begin{proof}
  Without loss of generality we may assume that $\norm{y}_2 = 1$.  We have
  \[
    T_d^n(y) = \sum_{i=1}^d \sum_{j=1}^n \frac{y_i}{\sqrt{n}} X_{i,j}.
  \]
  By a version of the Berry--Esseen theorem for Lipschitz test
  functions, 
  \[
    \abs{\E_{d,n} f\bigl(T_d^n(y)\bigr) - \frac{1}{\sqrt{2\pi}}
      \int_{-\infty}^\infty f(t) e^{-t^2/2} \ dt} \le 3\sum_{i=1}^d
    \sum_{j=1}^n \abs{\frac{y_i}{\sqrt{n}}}^3 \le \frac{3}{\sqrt{n}}
  \]
  for any $1$-Lipschitz function $f:\R \to \R$. (This is essentially
  contained in the work of Esseen \cite{Esseen}; see \cite[Proposition
  2.2]{Goldstein} for an explicit statement which includes the precise
  constant used here.) In particular, letting $f(t) = \abs{t}$, this
  implies that
  \[
    \abs{\norm{T_d^n(y)}_{L_1} - \sqrt{\frac{2}{\pi}}} \le
    \frac{3}{\sqrt{n}},
  \]
  from which the lemma follows.  (The stated constant $4$ is not
  sharp.)
\end{proof}

\begin{prop}
  \label{Pro:intrinsic-volume-limit}
  If $K \subseteq \ell_2^d$ is a convex body, then for
  each $0\le k \le d$,
  \[
    \lim_{n\to \infty} V_k'\bigl(\widetilde{T}_d^n(K)\bigr) = \frac{\omega_k}{2^k} V_k(K).
  \]
\end{prop}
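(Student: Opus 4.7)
The plan is to rewrite $V_k'(\widetilde{T}_d^n(K))$ as an expectation, pass to a Gaussian limit by the central limit theorem, and match the result with the Kubota formula \eqref{Eq:Kubota}.  Write $\xi_\omega \in \R^d$ for the vector whose $i$-th coordinate is $\frac{1}{\sqrt{n}}\sum_{j=1}^n \omega_{i,j}$, so that $T_d^n(y)(\omega) = \langle \xi_\omega, y\rangle$.  For a coordinate $k$-subspace $P\subseteq\R^{2^{nd}}$ indexed by distinct $\omega_1,\dots,\omega_k\in\Omega_{d,n}$, the composition $\pi_P\circ\widetilde{T}_d^n$ equals multiplication by $\sqrt{\pi/2}\cdot 2^{-nd}\Xi$, where $\Xi$ is the $k\times d$ matrix with rows $\xi_{\omega_1},\dots,\xi_{\omega_k}$.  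The elementary identity
\[
  \vol_k\bigl(A(K)\bigr) = \sqrt{\det(AA^T)}\,\vol_k(\pi_W K), \qquad W := (\ker A)^\perp,
\]
(valid for any linear $A\colon\R^d\to\R^k$ via the SVD, and trivially when $\operatorname{rank} A<k$) rewrites each summand.  Since tuples $(\omega_1,\dots,\omega_k)$ with any repetition give $\det(\Xi\Xi^T)=0$, summing over ordered $k$-tuples and dividing by $k!$ produces
\[
  V_k'\bigl(\widetilde{T}_d^n K\bigr) = \frac{(\pi/2)^{k/2}}{k!}\,\E\Bigl[\sqrt{\det(\Xi\Xi^T)}\,\vol_k(\pi_W K)\Bigr],
\]
with $\omega_1,\dots,\omega_k$ i.i.d.\ uniform on $\Omega_{d,n}$ and $W=\operatorname{row}(\Xi)$.

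By the central limit theorem, $\xi_\omega$ converges in distribution to $Z\sim N(0,I_d)$ as $n\to\infty$, so $(\xi_{\omega_1},\dots,\xi_{\omega_k})$ converges to an i.i.d.\ family $(Z_1,\dots,Z_k)$ of standard Gaussians in $\R^d$.  Hadamard's inequality gives $\sqrt{\det(\Xi\Xi^T)} \le \prod_{i=1}^k \norm{\xi_{\omega_i}}_2$, and $\vol_k(\pi_W K) \le \omega_k(\diam K)^k$; since $\E\norm{\xi_\omega}_2^{2k}$ is bounded uniformly in $n$ by standard moment bounds on Rademacher sums, the integrand is uniformly integrable, and the expectations converge to
\[
  \frac{(\pi/2)^{k/2}}{k!}\,\E\Bigl[\sqrt{\det(ZZ^T)}\,\vol_k(\pi_{W_Z} K)\Bigr],
\]
where $Z$ is a $k\times d$ matrix of i.i.d.\ $N(0,1)$ entries and $W_Z=\operatorname{row}(Z)$.

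To identify this limit with a multiple of $V_k(K)$, I would decompose $Z=AQ$ with $A$ a lower-triangular $k\times k$ Cholesky factor and $Q$ a $k\times d$ matrix with orthonormal rows, independent of $A$ and distributed so that $W_Z=\operatorname{row}(Q)$ is uniform on $\mathrm{Gr}_{d,k}$.  Then $\sqrt{\det(ZZ^T)}=\abs{\det A}$ is independent of $W_Z$, and the Bartlett decomposition gives the diagonal entries of $A$ as independent $\chi$-distributed variables with $A_{ii}\sim\chi_{d-i+1}$.  Setting $c_{d,k}:=\E\abs{\det A}=\prod_{i=1}^k\E\chi_{d-i+1}$,
\[
  \lim_{n\to\infty} V_k'\bigl(\widetilde{T}_d^n K\bigr) = \frac{(\pi/2)^{k/2}c_{d,k}}{k!}\int_{\mathrm{Gr}_{d,k}}\vol_k(\pi_P K)\,d\mu_{d,k}(P).
\]
Comparing to \eqref{Eq:Kubota}, the claimed equality reduces to the identity $(\pi/2)^{k/2}c_{d,k}/k! = (\omega_k/2^k)\binom{d}{k}\omega_d/(\omega_k\omega_{d-k})$, which follows by substituting $\E\chi_m=\sqrt{2}\,\Gamma(\frac{m+1}{2})/\Gamma(\frac{m}{2})$ and the explicit formula for $\omega_n$ and simplifying.

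The main technical hurdle is the uniform integrability needed to interchange the $n\to\infty$ limit with the expectation; once that is in place, the standard structural decomposition of Gaussian matrices forces the Kubota comparison, and the remaining gamma-function bookkeeping that pins down the constant $\omega_k/2^k$ is essentially routine.
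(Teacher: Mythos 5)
Your argument is correct, and its overall skeleton — rewriting $V_k'\bigl(\widetilde{T}_d^n(K)\bigr)$ as $\frac{1}{k!}(\pi/2)^{k/2}\,\E\vol_k\bigl(M_n^t(K)\bigr)$ for a $d\times k$ matrix of i.i.d.\ normalized Rademacher sums, then passing to a Gaussian matrix via the CLT with a truncation/uniform-integrability step controlled by Hadamard's inequality — is exactly the paper's. The one genuine divergence is the final step: the paper simply cites Tsirelson's theorem, which states $\E\vol_k\bigl(G^t(K)\bigr) = \frac{\omega_k k!}{(2\pi)^{k/2}}V_k(K)$ for a standard Gaussian matrix $G$, whereas you re-derive this identity from scratch by factoring $Z=AQ$ (Bartlett decomposition), using rotation invariance to make the row space uniform on $\mathrm{Gr}_{d,k}$ and independent of $\abs{\det A}$, computing $\E\abs{\det A}=\prod_{i=1}^k \E\chi_{d-i+1}$, and matching constants against the Kubota formula via the Legendre duplication formula. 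Your constant bookkeeping checks out (the telescoping product of gamma ratios gives $c_{d,k}=2^{k/2}\Gamma(\tfrac{d+1}{2})/\Gamma(\tfrac{d-k+1}{2})$, and the duplication formula turns this into $2^{-k}\binom{d}{k}\omega_d/\omega_{d-k}$ times the right factor), so your route is self-contained where the paper's is not; the cost is a page of gamma-function manipulation that the citation avoids. Two minor points worth tightening in a writeup: for the uniform integrability it is cleanest to note that the columns $\xi_{\omega_1},\dots,\xi_{\omega_k}$ are independent, so $\E\prod_i\norm{\xi_{\omega_i}}_2^2=d^k$ uniformly in $n$, which already suffices; and one should observe that $A\mapsto\sqrt{\det(AA^T)}\vol_k(\pi_{W}K)$ is continuous even where the rank drops (the determinant factor vanishes there), so that convergence in distribution applies.
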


\begin{proof}
  The case $k=0$ is trivial, since $V_0' = V_0 = 1$ always.  Given
  $x_1, \dots, x_k \in \Omega_{d,n}$ and $f \in \ell_1(\Omega_{d,n})$,
  we denote $\pi_{x_1, \dots, x_k}(f) = (f(x_1), \dots, f(x_k))$.
  Then the $\ell_1$ intrinsic volumes of
  $X \subseteq \ell_1(\Omega_{d,n})$ can be equivalently expressed as
  \begin{equation}
    \label{Eq:V_k'}
    V_k'(X) = \frac{1}{k!} \sum_{\substack{x_1, \dots, x_k\\ \text{distinct}}}
    \vol_k\bigl(\pi_{x_1, \dots, x_k}(X)\bigr)
    = \frac{1}{k!} \sum_{x_1, \dots, x_k}
    \vol_k\bigl(\pi_{x_1, \dots, x_k}(X)\bigr).
  \end{equation}
  The restriction to distinct summands can be dropped in
  \eqref{Eq:V_k'} since if $x_1, \dots, x_k$ are not distinct, then
  the dimension of the range of $\pi_{x_1, \dots, x_k}$ is smaller
  than $k$ and $\vol_k\bigl(\pi_{x_1, \dots, x_k}(X)\bigr) = 0$.
  
  Now
  \[
    \pi_{x_1, \dots, x_k}\bigl(\widetilde{T}_d^n(y)\bigr) =
    \sqrt{\frac{\pi}{2}} 2^{-nd} \bigl(\inprod{y}{S^n(x_1)}, \dots, \inprod{y}{S^n(x_k)}\bigr),
  \]
  where $S^n(x) = (S_1^n(x), \dots, S_d^n(x)) \in
  \R^d$. Equivalently,
  \[
    \pi_{x_1, \dots, x_k}\bigl( \widetilde{T}_d^n(y)\bigr) =
    \sqrt{\frac{\pi}{2}} 2^{-nd} M_n(x_1, \dots, x_k)^t y,
  \]
  where $M_n(x_1, \dots, x_k)$ is the $d \times k$ matrix with
  entries
  $\bigl(S_i^n(x_j)\bigr)_{\substack{1\le i \le d \\ 1 \le j \le
      k}}$ and $M_n^ty$ is given by matrix multiplication. It follows
  that
  \begin{equation}
    \label{Eq:vol-pi-T}
    \vol_k\bigl( \pi_{x_1, \dots, x_k} \bigl(
    \widetilde{T}_d^n(K) \bigr)\bigr) = \left(\frac{\pi}{2} \right)^{k/2}
    2^{-ndk} \bigl(M_n^t (K)\bigr),
  \end{equation}
  Combining \eqref{Eq:V_k'} and \eqref{Eq:vol-pi-T}, we obtain
  \begin{equation}
    \label{Eq:V_k'-M}
  V_k'\bigl(\widetilde{T}_d^n(K)\bigr) = \frac{1}{k!}
  \left(\frac{\pi}{2} \right)^{k/2} \E \vol_k
  \bigl(M_n^t(K)\bigr),
  \end{equation}
  where $M_n$ is a $d\times k$ random matrix with independent entries
  each distributed as $\frac{1}{\sqrt{n}} \sum_{j=1}^n X_{1,j}$.

  The idea now is that by the central limit theorem, $M_n$ converges in
  distribution as $n \to \infty$ to a $d\times k$ random matrix $G$
  with independent standard Gaussian entries, and by a result of
  Tsirelson \cite{Tsirelson} (see also \cite{Vitale2}),
  \begin{equation}
    \label{Eq:Tsirelson}
    \E \vol_k \bigl(G^t (K)\bigr) = \frac{\omega_k k!}{(2\pi)^{k/2}}
    V_k(K).
  \end{equation}
  The application of the central limit theorem is not quite immediate,
  however, due to the unboundedness of $\vol_k\bigl(M_n^t(K)\bigr)$ as
  a function of $M_n$.  This can be handled with a standard truncation
  argument as follows.

  For a $d \times k$ matrix $A$, we write
  \[
    F(A) = \vol_k\bigl(A^t(K)\bigr) = \sqrt{\det(A^t A)}
    \vol_k\bigl(\pi_{C(A)}(K)\bigr),
  \]
  where $C(A)$ denotes the column space of $A$.  There exists an
  $R > 0$ such that $K$ is contained in a Euclidean ball of radius
  $R$; thus $\vol_k \bigl(\pi_{C(A)}(K)\bigr) \le R^k \omega_k$ for
  every $A$.  It follows that for each $D > 0$,
  \[
    F_D(A) = \vol_k\bigl(\pi_{C(A)}(K)\bigr) \min\left\{\sqrt{\det(A^t
        A)}, D \right\}
  \]
  is a bounded, continuous function of $A$. We have
  \begin{multline}
    \label{Eq:truncation}
    \abs{\E F(M_n) - \E F(G)} \\
    \le
    \abs{\E F(M_n) - \E F_D(M_n)}
    + \abs{\E F_D(M_n) - \E F_D(G)}
    + \abs{\E F_D(G) - \E F(G)}.
  \end{multline}
  The central limit theorem implies that
  \begin{equation}
    \label{Eq:clt}
    \lim_{n\to \infty} \abs{\E F_D(M_n) - \E F_D(G)} = 0
  \end{equation}
  for each $D>0$.

  Hadamard's inequality \cite[Theorem 7.8.1]{HoJo} implies
  that $\det (A^t A) \le \prod_{j=1}^k \norm{a_j}_2^2$, where $a_j$
  denotes the $j^{\mathrm{th}}$ column of $A$. It follows that
  \[
    \E \det (M_n^t M_n) \le \E \prod_{j=1}^k \norm{m_j}_2^2 =
    \prod_{j=1}^k \E \norm{m_j}_2^2 = d^k,
  \]
  and so 
  \begin{align*}
    \abs{\E F(M_n) - \E F_D(M_n)} & \le R^k \omega_k \E \left[\sqrt{\det
      (M_n^t M_n)} \ind{\sqrt{\det (M_n^t M_n)} > D}\right] \\
    & \le R^k \omega_k \sqrt{\E \det (M_n^t M_n)} 
      \sqrt{\Prob \left[ \sqrt{\det (M_n^t M_n)} > D \right]}
    \le \frac{R^k \omega_k d^{k}}{D} 
  \end{align*}
  for each $n$ by the Cauchy--Schwarz and Markov inequalities. The
  same argument applies to the last term in \eqref{Eq:truncation}
  (which could also be more simply handled with the monotone or
  dominated convergence theorem).

  By \eqref{Eq:truncation} and \eqref{Eq:clt} we now have that
  \[
    \limsup_{n \to \infty} \abs{\E F(M_n) -\E F(G)} \le \frac{2 R^k
      \omega_k d^{k}}{D}
  \]
  for each $D > 0$. Letting $D \to \infty$ we conclude that
  $\E F(M_n) \xrightarrow{n \to \infty} \E F(G)$, which, by
  \eqref{Eq:V_k'-M} and \eqref{Eq:Tsirelson}, completes the proof.
\end{proof}

\begin{proof}[Proof of Theorem \ref{Thm:l2-magnitude}]
  Recall that the Lipschitz distance between two homeomorphic metric
  spaces $(X,d_X)$ and $(Y,d_Y)$ is defined to be
  \[
  \inf\Set{\abs{\log \operatorname{dil}(f)} + \abs{\log
      \operatorname{dil}(f^{-1})}}{f : X \to Y \text{ bi-Lipschitz}},
  \]
  where
  \[
  \operatorname{dil}(f) = \sup_{x_1 \neq x_2}
      \frac{d_Y\bigl(f(x_1),f(x_2)\bigr)}{d_X(x_1,x_2)} 
  \]
  and $\operatorname{dil}(f^{-1})$ is defined similarly.
  
  If $X \subseteq \ell_2^d$ is a fixed compact set (equipped with the
  $\ell_2^d$ metric), then Lemma \ref{Lem:clt} implies that the metric
  spaces $\widetilde{T}_n^d(X) \subseteq \ell_1(\Omega_{d,n})$
  (equipped with the $\ell_1(\Omega_{d,n})$ metric) converge to $X$ in
  the Lipschitz distance when $n \to \infty$.  This implies that
  $\widetilde{T}_n^d(X) \xrightarrow{n \to \infty} X$ also in the
  Gromov--Hausdorff distance (see \cite[Section 3.A]{Gromov}).

  Magnitude is lower semicontinuous with respect to the
  Gromov--Hausdorff topology on the collection of positive definite
  metric spaces \cite[Theorem 2.6]{MM-pdms}. It follows that
  \begin{equation*}
    \Mag{X} \le \liminf_{n\to \infty} \Mag{\widetilde{T}_d^n(X)}.
  \end{equation*}
  If $K \subseteq \ell_2^d$ is a convex body, Theorem
  \ref{Thm:l1-magnitude} then implies that
  \[
  \Mag{K} \le \liminf_{n \to \infty} \sum_{k=0}^{2^{nd}} \frac{1}{2^k}
  V'_k\bigl(\widetilde{T}_d^n(K)\bigr) = \liminf_{n \to \infty} \sum_{k=0}^d \frac{1}{2^k}
  V'_k\bigl(\widetilde{T}_d^n(K)\bigr).
  \]
  The upper bound in \eqref{Eq:l2-magnitude} now follows from
  Proposition \ref{Pro:intrinsic-volume-limit}.
  
  Equality for $d=1$ follows from the known formula
  $\Mag{[0,\ell]} = 1 + \frac{1}{2} \ell$ \cite[Theorem 7]{LeWi}.
\end{proof}

\bigskip

Theorem \ref{Thm:l2-magnitude} and its proof highlight some open
questions about continuity properties of magnitude.  As noted in the
statement of Theorem \ref{Thm:l1-magnitude}, the upper bound in
\eqref{Eq:l1-magnitude} is actually equal to $\Mag{K}$ if
$K\subseteq \ell_1^N$ is $N$-dimensional; the upper bound for
lower-dimensional sets in $\ell_1^N$ follows by approximation by
$N$-dimensional sets.  As we have seen, Theorem \ref{Thm:l2-magnitude}
is similarly deduced by approximating $K \subseteq \ell_2^d$ by
subsets of $\ell_1^N$ which are homeomorphic to $K$.

The $t \to \infty$ asymptotics of the magnitude function in
\eqref{Eq:BaCa} show that if $K$ is greater than one-dimensional, then
the upper bound on $\Mag{tK}$ in \eqref{Eq:magnitude-poly} must be
strict for large enough $t$.  This implies that somewhere in the
string of approximations leading from Theorem \ref{Thm:l1-magnitude}
for $N$-dimensional sets in $\ell_1^N$ to Theorem
\ref{Thm:l2-magnitude} for convex bodies in $\ell_2^d$, magnitude must
fail to be continuous.  In particular, at least one of the two
following statements must be false:
\begin{itemize}
\item For each $N$, if $K \subseteq \ell_1^N$ is a convex body
  then $\Mag{K} = \sum_{k=0}^N 2^{-k} V_k'(K)$ (\cite[Conjecture 
3.4.10]{Leinster}, \cite[Conjecture 4.5]{LeMe-survey}). Equivalently,
  magnitude is continuous with respect to the Hausdorff
  distance on the collection of convex bodies in $\ell_1^N$.

\item For each $d$, magnitude is continuous with respect to the
  Hausdorff distance on the collection of $d$-dimensional convex
  bodies in $L_1$.
\end{itemize}

Magnitude is known to be continuous on the collection of
$d$-dimensional convex bodies in any fixed $d$-dimensional subspace of
$L_1$ \cite[Theorem 4.15]{LeMe-survey}. Moreover, the known examples
of discontinuity of magnitude all involve change of topology. This
includes the six-point space from \cite[Example 2.2.8]{Leinster}
discussed above shrinking to a one-point space, as well as the
approximation of a sphere in Euclidean space by spherical shells
\cite{GiGo-cafe,Willerton-email}.  Available evidence is thus in favor
of the second statement above (although it is possible that both
statements are false).  In fact, we conjecture the following stronger
statement:

\begin{conj}
  \label{Con:continuity}
  Let $(X,d_X)$ be a compact metric space of negative type.  Then
  magnitude is continuous with respect to the Lipschitz distance on the
  family of metric spaces $(Y,d_Y)$ of negative type which are
  bi-Lipschitz equivalent to $(X,d_X)$.
\end{conj}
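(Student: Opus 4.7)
The plan is to prove lower and upper semicontinuity of magnitude along Lipschitz-convergent sequences separately, then combine. Lower semicontinuity comes essentially for free from the known lower semicontinuity under the Gromov--Hausdorff topology (\cite[Theorem 2.6]{MM-pdms}), since Lipschitz convergence of compact spaces implies Gromov--Hausdorff convergence. So the substantive content of the conjecture is upper semicontinuity: given $Y_n \to X$ in Lipschitz distance via bi-Lipschitz maps $f_n \cln X \to Y_n$ with $\max\bigl\{\abs{\log \operatorname{dil}(f_n)}, \abs{\log \operatorname{dil}(f_n^{-1})}\bigr\} \to 0$, we must show $\limsup_n \Mag{Y_n} \le \Mag{X}$.

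The first step is the quantitative observation that Lipschitz closeness yields entrywise multiplicative closeness of similarity matrices: if $\operatorname{dil}(f), \operatorname{dil}(f^{-1}) \le e^\epsilon$, then for all $x, x' \in X$,
\[
  e^{-(e^\epsilon - 1) D} \le \frac{e^{-d_Y(f(x), f(x'))}}{e^{-d_X(x, x')}} \le e^{(e^\epsilon - 1) D},
\]
where $D = \diam(X)$. Consequently the similarity matrices of any finite subset of $X$ and its image in $Y$ agree up to entrywise error bounded by a constant (depending on $D$) times $\epsilon$.

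For the upper semicontinuity step I would, for each $n$, select a finite $G_n \subseteq Y_n$ and a weight $v_n \in \R^{G_n}$ achieving $(\mathbf{1}^T v_n)^2/(v_n^T Z_{Y_n}(G_n) v_n) \ge \Mag{Y_n} - 1/n$, pull back to $F_n = f_n^{-1}(G_n) \subseteq X$ with the same weight, and compare the two quadratic forms:
\[
  \abs{v_n^T Z_X(F_n) v_n - v_n^T Z_{Y_n}(G_n) v_n} \le \norm{Z_X(F_n) - Z_{Y_n}(G_n)}_{\mathrm{op}} \norm{v_n}_2^2.
\]
Since the operator-norm error is at most $k_n C \epsilon_n$ with $k_n = \abs{G_n}$, the pulled-back weight would lower-bound $\Mag{X}$ to within $o(1)$ of $\Mag{Y_n}$ \emph{provided} $k_n \norm{v_n}_2^2 / (v_n^T Z_{Y_n}(G_n) v_n)$ stays bounded as $n \to \infty$, finishing the proof.

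The main obstacle is precisely this control: near-optimal weightings for $Y_n$ can in principle require both large cardinality $k_n$ and highly oscillating weights whose $\ell_2$ norm dwarfs $\mathbf{1}^T v_n$, corresponding to very small eigenvalues of $Z_{Y_n}(G_n)$. To circumvent it I would reformulate magnitude via signed Borel measures on $X$ and try to show that near-optimal measures on the $Y_n$, pulled back by $f_n^{-1}$, form a tight family of uniformly bounded total variation, so that a weak-$*$ limit measure on $X$ realizes $\limsup_n \Mag{Y_n}$ and is bounded above by $\Mag{X}$. Establishing such uniform total-variation control in this generality --- without further structure on $X$ such as the convex-body setting exploited elsewhere in this paper --- appears to demand a genuinely new a priori estimate on near-optimal weightings, and this, I expect, is the hard core of the conjecture.
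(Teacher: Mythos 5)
The statement you are addressing is Conjecture \ref{Con:continuity}, which the paper explicitly leaves open: no proof is given, and the paper notes that a proof would in fact refute \cite[Conjecture 3.4.10]{Leinster} for convex bodies in $\ell_1^N$ without interior. So there is no argument of record to compare against, and your proposal must stand on its own; it does not close the conjecture, as you yourself acknowledge. The parts you do carry out are sound: lower semicontinuity along Lipschitz-convergent sequences follows from Gromov--Hausdorff lower semicontinuity \cite[Theorem 2.6]{MM-pdms}, exactly as the paper uses in proving Theorem \ref{Thm:l2-magnitude}, and your entrywise comparison $e^{-(e^\eps-1)D} \le e^{d_X(x,x')-d_Y(f(x),f(x'))} \le e^{(e^\eps-1)D}$ of the similarity matrices is correct.

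The genuine gap is the one you name: to transfer a near-optimal weighting $v_n$ on $G_n \subseteq Y_n$ back to $X$, you need $k_n \norm{v_n}_2^2/\bigl(v_n^T Z_{Y_n}(G_n) v_n\bigr)$ to remain bounded, and nothing in the definition \eqref{Eq:mag-def} provides such an a priori bound; the quadratic form can be arbitrarily small relative to $\norm{v_n}_2^2$ when $Z_{Y_n}(G_n)$ has near-zero eigenvalues, which is exactly what happens in the known discontinuity examples (the six-point space of \cite[Example 2.2.8]{Leinster} collapsing to a point, the spherical-shell approximations of a sphere). The entire content of the conjecture is that a bi-Lipschitz constraint with dilatation tending to $1$ forbids this degeneration, so the missing estimate is not a technicality but the crux. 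Your fallback --- recasting magnitude in terms of signed weight measures and seeking uniform total-variation control so that a weak-$*$ limit realizes $\limsup_n \Mag{Y_n}$ --- is a reasonable strategy, and some such compactness is what underlies the known continuity result for full-dimensional convex bodies in a fixed subspace of $L_1$ \cite[Theorem 4.15]{LeMe-survey}, where convexity supplies the needed structure; but in the generality of the conjecture this uniform bound is precisely what is unproved, so the upper-semicontinuity half, and hence the conjecture, remains open.
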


As noted above, Conjecture \ref{Con:continuity} and known results
would show that \cite[Conjecture 3.4.10]{Leinster} and
\cite[Conjecture 4.5]{LeMe-survey} are false for convex bodies
in $\ell_1^N$ without interior.

\begin{proof}[Proof of Corollary \ref{Cor:Hilbert}]
  If $Y$ is any compact positive definite metric space and
  $\emptyset \neq X \subseteq Y$, then
  \begin{equation}
    \label{Eq:monotone}
    1 \le \Mag{X} \le \Mag{Y};
  \end{equation}
  this follows immediately from our definition \eqref{Eq:mag-def} of
  magnitude.  It therefore suffices here to prove that $\Mag{K} <
  \infty$. 

  Let $\Set{x_n}{n \in \N}$ be a countable dense subset of $K$, and
  let $K_n$ be the intersection of $K$ with the linear span of
  $\{x_1, \dots, x_n\}$. Then $K_n \xrightarrow{n \to \infty} K$ in
  the Hausdorff distance, and \cite[Corollary 2.7]{MM-pdms} implies
  that $\Mag{K} = \lim_{n \to \infty} \Mag{K_n}$.

  As a consequence of the Alexandrov--Fenchel inequalities, $V_k(K_n)
  \le \frac{1}{k!} V_1(K_n)^{k}$ for every $k$ and $n$ (see
  \cite[Theorem 2]{McMullen}), and therefore by Theorem
  \ref{Thm:l2-magnitude},
  \[
    \Mag{K_n} \le \sum_{k=0}^n \frac{\omega_k}{4^k k!} V_1(K_n)^k
    \le \sum_{k=0}^\infty \frac{\omega_k}{4^k k!} V_1(K)^k.
  \]
  We conclude that
  \begin{equation}
    \label{Eq:V1-bound}
    \Mag{K} \le \sum_{k=0}^\infty \frac{\omega_k}{4^k k!} V_1(K)^k <
    \infty.
    \qedhere
  \end{equation}
\end{proof}

\begin{proof}[Proof of Corollary \ref{Cor:point}]
  Define the function
  \[
    f(t) = \sum_{k=0}^\infty \frac{\omega_k V_1(K)^k}{4^k k!} t^k.
  \]
  This power series converges for every $t \in \R$. From
  \eqref{Eq:monotone} and \eqref{Eq:V1-bound} it follows that
  \[
    1 \le \Mag{tX} \le \Mag{tK} \le f(t).
  \]
  Since $f(0) = 1$, this implies the corollary.
\end{proof}

\section{Proofs of Theorem \ref{Thm:ball-derivative} and Corollary
  \ref{Cor:mean-width}}
\label{S:ball-derivative}

Theorem \ref{Thm:ball-derivative} depends on an exact combinatorial
formula for the magnitude of a Euclidean ball in odd dimensions due to
Willerton \cite{Willerton-ball1}. To state it, we first need some
terminology and notation.

A \textbf{Schr\"oder path} is a finite directed path in $\Z^2$ in
which each step with starting point $(x,y) \in \Z^2$ is either an
\textbf{ascent} to $(x+1, y+1)$, a \textbf{descent} to $(x+1,y-1)$, or
a \textbf{flat step} to $(x+2,y)$.  For $k \ge 0$, a \textbf{disjoint
  $k$-collection} is a family of Schr\"oder paths from $(-i,i)$
to $(i,i)$ for each $0 \le i \le k$, such that no node in $\Z^2$ is
contained in two of the paths.  (Since all nodes of the paths have an
even sum of coordinates, it follows that the paths do not cross.)  We
denote by $X_k$ the set of all disjoint $k$-collections, and by
$X_k^j$ the set of disjoint $k$-collections with exactly $j$ flat
steps.  The set $X_k^0$ consists of a single collection, denoted
$\sigma^k_{\mathrm{roof}}$ in \cite{Willerton-ball1}, in which for
each $i$, the $i^\mathrm{th}$ path consists of $i$ ascents followed by
$i$ descents.

For a collection $\sigma \in X_k$ we write $\tau \in \sigma$ if $\tau$
is a step in one of the paths in $\sigma$.  For an indeterminate $t$ define
\[
w_j(\tau) = \begin{cases}
  1 & \text{if $\tau$ is an ascent,} \\
  t & \text{if $\tau$ is a flat step,}\\
  y+1-j & \text{if $\tau$ is a descent from height $y$ to height $y-1$.}
\end{cases}
\]

\begin{thm}[{\cite[Corollary 27]{Willerton-ball1}}]
  \label{Thm:Willerton-ball}
  Let $d = 2m+1$ be odd. Then
  \[
  \Mag{t B_2^d} = \frac{\displaystyle \sum_{\sigma \in X_{m+1}} \prod_{\tau \in
      \sigma} w_2(\tau)}{\displaystyle d ! \sum_{\sigma \in X_{m-1}} \prod_{\tau \in
        \sigma} w_0(\tau)}
  \]
  for all $t > 0$.
\end{thm}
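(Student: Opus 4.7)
My proof plan is based on the potential-theoretic characterization of magnitude in odd-dimensional Euclidean space due to Barceló and Carbery \cite{BaCa}: for $d = 2m+1$, $e^{-|x|}$ on $\R^d$ is, up to a dimensional constant, a fundamental solution of the operator $L = (I - \Delta)^{m+1}$. Consequently, the magnitude of a convex body $K \subseteq \R^d$ equals $\int_K h$, where $h$ is a distribution supported on $K$ satisfying $\int_K e^{-|x-y|} h(y)\,dy = 1$ for every $x \in K$. Applying $L$ to this integral equation shows that $h$ consists of a constant bulk density plus finitely many distributional layers on $\partial K$.

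I would then specialize to $K = tB_2^d$ and exploit radial symmetry. Let $u(r)$ denote the radial potential; then $u \equiv 1$ on $[0,t]$, while on $(t,\infty)$ the function $u$ must satisfy the homogeneous radial equation $Lu = 0$ and decay at infinity. The decaying solution space at infinity is $(m+1)$-dimensional, so matching $u$ together with sufficiently many derivatives across $r = t$ yields a square linear system that determines the boundary layers of $h$. Solving this system via Cramer's rule expresses $\Mag{tB_2^d}$ as a ratio of two explicit determinants whose entries are polynomials in $t$ with factorial and binomial coefficients, obtained by differentiating the natural basis $\{r^j e^{-r} : 0 \le j \le m\}$ for the exterior solution space.

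The final and most delicate step is to reinterpret these determinants combinatorially via the Lindström-Gessel-Viennot lemma. Each matrix entry should factor as a sum indexed by intermediate integer ``heights,'' which identifies it as a matrix element of a transfer operator on a lattice whose admissible local transitions are precisely the three Schr\"oder moves: a diagonal up-step (weight $1$), a flat step (weight $t$, from the radius entering through the Taylor expansion at $r=t$), and a diagonal down-step from height $y$ (weight $y+1-j$, which is exactly the factor produced when $(I-\Delta)$ acts on $r^y e^{-r}$ in the appropriate shifted basis). LGV then rewrites each determinant as a signed sum over families of non-intersecting paths from $(-i,i)$ to $(i,i)$ for $0 \le i \le k$; the antisymmetry of the determinant enforces non-crossing and, since all nodes have even coordinate sum, collapses the signed sum to an unsigned sum over $X_k$. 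The shift from $w_0$ in the denominator to $w_2$ in the numerator reflects the two-unit increase in operator order between the homogeneous and inhomogeneous contributions.

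The main obstacle is precisely the combinatorial reinterpretation in the last paragraph: recognizing the LGV-compatible factorization hidden in the ODE-produced determinants, identifying the correct underlying lattice, and establishing the height-dependent descent weight $y+1-j$ rigorously. The earlier steps --- the reduction to a radial ODE, the solution via Cramer's rule, and the resulting determinantal formula --- are comparatively mechanical, while the creative leap is the path-counting interpretation. This is presumably why Willerton's original proof in \cite{Willerton-ball1} is a substantial combinatorial argument rather than a short corollary of the Barcel\'o--Carbery framework.
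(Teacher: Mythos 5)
The paper does not prove this statement at all: it is quoted verbatim from Willerton (\cite[Corollary 27]{Willerton-ball1}) and used as a black box, so there is no internal proof to compare against. Your outline does track the broad strategy of Willerton's actual argument --- Barcel\'o--Carbery's potential-theoretic characterization, reduction to a radial boundary-value problem for $(I-\Delta)^{m+1}$, a determinantal formula via Cramer's rule, and a Lindstr\"om--Gessel--Viennot path interpretation --- so the plan is not wrongheaded. But as a proof it has a genuine gap, and you say so yourself: the entire combinatorial heart of the theorem (identifying the determinant entries as transfer-matrix elements, deriving the specific weights $1$, $t$, and $y+1-j$, pinning down the sources $(-i,i)$ and sinks $(i,i)$, and the shift from $w_0$ to $w_2$) is described as "the main obstacle" rather than carried out. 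Asserting that a factorization "should" exist and that LGV "then rewrites" the determinant is a statement of intent, not an argument; without it you have not derived the right-hand side of the formula.

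There is also a concrete technical error earlier in the plan. The space of decaying radial solutions of $(I-\Delta)^{m+1}u=0$ on $(t,\infty)$ in $\R^{2m+1}$ is \emph{not} spanned by $\{r^j e^{-r} : 0 \le j \le m\}$ for $d \ge 3$: already for $d=3$ one checks that $(I-\Delta)^2(re^{-r}) = 8e^{-r}/r \neq 0$, whereas the correct basis is $\{e^{-r}/r,\, e^{-r}\}$, and in general the exterior solutions are built from reverse Bessel polynomials (which is precisely why Willerton's determinants are Hankel determinants of reverse Bessel polynomials). Since the entries of your Cramer's-rule determinants come from differentiating this basis at $r=t$, getting the basis wrong changes the determinants and would derail the subsequent combinatorial identification. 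So the proposal needs both the corrected exterior basis and a fully executed LGV step before it constitutes a proof.
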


\begin{proof}[Proof of Theorem \ref{Thm:ball-derivative}]
  First note that by the Kubota formula \eqref{Eq:Kubota},
  \begin{equation}
    \label{Eq:ball-V1}
  V_1(B_2^d) = \frac{(2m+1) \sqrt{\pi} \Gamma (m+1)}{\Gamma \left(m +
      \frac{3}{2}\right)} = 2 \binom{m-\frac{1}{2}}{m}^{-1},
  \end{equation}
  where $\binom{x}{k} = \frac{x (x-1) \cdots (x-k+1)}{k!}$ denotes the
  generalized binomial coefficient for $x \in \R$ and $k$ a
  nonnegative integer (with the convention that $\binom{x}{0} = 1$).

  Now write
  \[
  N(t) = \sum_{\sigma \in X_{m+1}} \prod_{\tau \in \sigma} w_2(\tau)
  \qquad \text{and} \qquad D(t) = \sum_{\sigma \in X_{m-1}}
  \prod_{\tau \in \sigma} w_0(\tau).
  \]
  Willerton showed in \cite[Theorem 28]{Willerton-ball1} that $N(0) =
  d! D(0)$. We wish to compute
  \begin{equation}
    \label{Eq:quotient-rule}
  \left.\frac{d}{dt} \Mag{tB_2^d}\right\vert_{t=0} = \frac{N'(0) D(0)
    - N(0)D'(0)}{d! D(0)^2} = \frac{N'(0) - d! D'(0)}{N(0)}.
  \end{equation}
  We have that
  \begin{equation}
    \label{Eq:at-0}
    \begin{split}
      N(0) & = \sum_{\sigma \in X^0_{m+1}} \prod_{\tau \in \sigma}
      w_2(\tau) = \prod_{\tau \in \sigma^{m+1}_{\mathrm{roof}}} w_2(\tau), \\
      N'(0) & = t^{-1} \sum_{\sigma \in X^1_{m+1}} \prod_{\tau \in \sigma} w_2(\tau), \\
      D'(0) & = t^{-1} \sum_{\sigma \in X^1_{m-1}} \prod_{\tau \in \sigma} w_0(\tau). \\
    \end{split}
  \end{equation}
  It is easy to give an explicit expression for $N(0)$, but it is more
  convenient here to leave it in the form above.  
  
  We instead begin by simplifying the right hand side of
  \eqref{Eq:quotient-rule} via the same trick used in
  \cite{Willerton-ball1} to show $N(0) = d! D(0)$.  Namely, each
  $\sigma \in X_{m-1}$ gives rise to a $\mu(\sigma) \in X_{m+1}$ by
  shifting all paths up two units, adding ascents from $(-i,i)$ to
  $(-i+1,i+1)$ and descents from $(i-1,i+1)$ to $(i,i)$ for
  $1 \le i \le m$, and finally adding a path from $(-(m+1),m+1)$ to
  $(m+1,m+1)$ consisting of $m+1$ ascents followed by $m+1$ descents
  (see \cite[Figure 4]{Willerton-ball1}).  Then $\mu(\sigma)$ has the
  same number of flat steps as $\sigma$, and
  \[
  \prod_{\tau \in \mu(\sigma)} w_2(\tau) = d! \prod_{\tau \in
    \sigma} w_0(\tau).
  \]
  It therefore follows from \eqref{Eq:quotient-rule} and \eqref{Eq:at-0} that
  \[
    \left.\frac{d}{dt} \Mag{tB_2^d}\right\vert_{t=0} = \sum_{\sigma
      \in X^1_{m+1} \setminus \mu(X^1_{m-1})} \frac{t^{-1} \prod_{\tau
        \in \sigma} w_2(\tau)}{\prod_{\tau \in
        \sigma^{m+1}_{\mathrm{roof}}} w_2(\tau)}.
  \]

  For $1 \le p \le k$ and $0 \le q \le k-p$, let $\sigma^k_{p,q}$
  denote the disjoint $k$-collection described as follows: the
  $p^{\mathrm{th}}$ path consists of $p-1$ ascents, one flat step, and
  $p-1$ descents.  For $p+1 \le i \le p+q$, the $i^\mathrm{th}$ path
  consists of $i-1$ ascents, one descent, one ascent, and $i-1$
  descents.  For $i<p$ and $i > p+q$, the $i^\mathrm{th}$ path
  consists of $i$ ascents followed by $i$ descents. (See Figure
  \ref{Fig:sigma-4-2-1}.)  
  \begin{figure}
    \centering
    \includegraphics[width=2in]{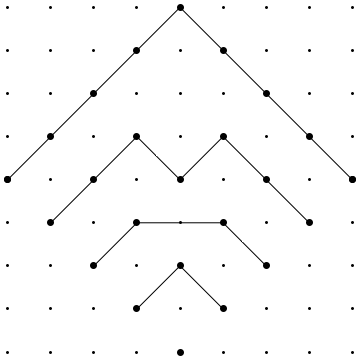}
    \caption{The disjoint $4$-collection $\sigma^4_{2,1}$.
    \label{Fig:sigma-4-2-1}}
  \end{figure}
  It is not hard to show that
  \[
    X^1_k = \Set{\sigma^k_{p,q}}{1 \le p \le k, \ 0 \le q \le k-p}.
  \]
  Moreover,
  \[
    X^1_{m+1} \setminus \mu(X^1_{m-1}) = \Set{\sigma^{m+1}_{1,q}}{0
      \le q \le m-1}
    \cup \Set{\sigma^{m+1}_{p,m+1-p}}{1 \le p \le m+1},
  \]
  where the parameter ranges are chosen so that this is a disjoint
  union.  We therefore have that
  \begin{equation}
    \label{Eq:2-sums-weights}
    \left.\frac{d}{dt} \Mag{tB_2^d}\right\vert_{t=0} = \sum_{q=0}^{m-1}
      \frac{t^{-1} \prod_{\tau \in \sigma^{m+1}_{1,q}}
      w_2(\tau)}{\prod_{\tau \in \sigma^{m+1}_{\mathrm{roof}}} w_2(\tau)}
    + \sum_{p = 1}^{m+1} \frac{t^{-1} \prod_{\tau \in \sigma^{m+1}_{p,m+1-p}}
      w_2(\tau)}{\prod_{\tau \in \sigma^{m+1}_{\mathrm{roof}}}
      w_2(\tau)}.
  \end{equation}
  By considering only which descents in $\sigma^{m+1}_{p,q}$ are not
  in $\sigma^{m+1}_{\mathrm{roof}}$, and vice versa, we find that
  \[
    \frac{t^{-1} \prod_{\tau \in \sigma^{m+1}_{p,q}}
      w_2(\tau)}{\prod_{\tau \in \sigma^{m+1}_{\mathrm{roof}}}
      w_2(\tau)} = \frac{\prod_{j=1}^q [2(p+j)-2]}{\prod_{j=0}^q [2(p+j)
      - 1]}.
  \]

  With some algebraic manipulation, the right hand side of
  \eqref{Eq:2-sums-weights} becomes
  \[
  \sum_{q = 0}^{m-1} \binom{q+\frac{1}{2}}{q}^{-1} +
  \binom{m+\frac{1}{2}}{m}^{-1} \sum_{k=0}^{m} \binom{k -
    \frac{1}{2}}{k}.
  \]
  A straightforward induction on $m$ shows that 
  \[
  \sum_{k=0}^{m} \binom{k -
    \frac{1}{2}}{k} = \binom{m+\frac{1}{2}}{m}
  \]
  for each $m \ge 0$.  By \eqref{Eq:ball-V1}, it thus remains to show
  that
  \begin{equation}
    \label{Eq:1-sum}
    \sum_{q = 0}^{m-1} \binom{q+\frac{1}{2}}{q}^{-1} =
    \binom{m-\frac{1}{2}}{m}^{-1} - 1
  \end{equation}
  for $m \ge 0$.  This follows by observing that both sides of
  \eqref{Eq:1-sum} are $0$ when $m=0$, and that
  \[
  \binom{m+\frac{1}{2}}{m+1}^{-1} - \binom{m-\frac{1}{2}}{m}^{-1} =
  \binom{m+\frac{1}{2}}{m}^{-1}.
  \qedhere
  \]
\end{proof}

\begin{proof}[Proof of Corollary \ref{Cor:mean-width}]
  The upper bound follows immediately from \eqref{Eq:magnitude-poly}.
  For the lower bound, for each odd $k$ and each $k$-dimensional
  affine subspace $E$, $K$ contains an isometric copy of
  $\inrad(K \cap E) B_2^k$, and so by Theorem
  \ref{Thm:ball-derivative} and \eqref{Eq:monotone},
  \begin{equation*}
    \begin{split}
      \Mag{tK} & \ge \Mag{t \inrad(K \cap E) B_2^k} = 1 +
      \frac{V_1(B_2^k)}{2} \inrad(K \cap E) t + o(t) \\
      & \ge 1 + c \sqrt{k} \inrad(K \cap E) t + o(t).  \qedhere
    \end{split}
  \end{equation*}
\end{proof}

\section*{Acknowledgements}
This research was partially supported by Collaboration Grant \#315593
from the Simons Foundation. The author thanks Tom Leinster and Simon
Willerton for useful conversations, and the anonymous referee for
suggestions which improved the exposition of this paper.

\bibliographystyle{plain}
\bibliography{intrinsic-volumes}

\end{document}